\newtheorem{theorem}{Theorem}[section]
\newtheorem{lemma}[theorem]{Lemma}
\newtheorem{proposition}[theorem]{Proposition}
\theoremstyle{definition}
\newtheorem{example}[theorem]{Example}
\numberwithin{equation}{section}
\newcommand{\remove}[1]{}
\theoremstyle{remark}
\newcommand{\ds} {\displaystyle}
\newcommand{\e}{\epsilon}
\newcommand{\al} {\alpha}
\newcommand{\de} {\delta}
\newcommand{\Om} {\Omega}
\newcommand{\ra} {\rightarrow}
\newcommand{\De} {\Delta}
\newcommand{\la} {\lambda}
\newcommand{\La} {\Lambda}
\newcommand{\noi} {\noindent}
\newcommand{\na} {\nabla}
\newcommand{\oline} {\overline}
\newcommand{\mb} {\mathbb}
\newcommand{\mc} {\mathcal}
\begin{document}
	
\title[Quasilinear Choquard problem with $N$-Laplacian]{Quasilinear Choquard equations involving $N$-Laplacian and critical exponential nonlinearity}
\author[Reshmi Biswas, Sarika Goyal and  K. Sreenadh]
{Reshmi Biswas, Sarika Goyal and K. Sreenadh}
\address{Reshmi Biswas \newline
	Department of Mathematics, IIT Delhi, Hauz Khas, New Delhi 110016, India}
\email{reshmi15.biswas@gmail.com}

\address{Sarika Goyal \newline
	Department of Mathematics, Bennett University, Greater Noida, Uttar Pradesh 201310, India}
\email{sarika.goyal@bennett.edu.in }

\address{K. Sreenadh \newline
	Department of Mathematics, IIT Delhi, Hauz Khas, New Delhi 110016, India}
\email{sreenadh@maths.iitd.ac.in}
\subjclass[2020]{35J20, 35J60}

\keywords{Quasilinear equation, $N$-Laplacian, Choquard equation, Critical nonlinearity, Trudinger-Moser inequality, Variational methods}

	
	
	\maketitle
	\begin{center}
		{\bf\small Abstract}
		
		\vspace{3mm} \hspace{.05in}\parbox{4.5in} {{\small In the present paper, we study  a class of quasilinear  Choquard equations involving $N$-Laplacian and the nonlinearity with the  critical exponential growth.	We discuss the existence of  positive solutions of such equations.} }
	\end{center}

	
	%
	%
	%
	\section{\bf Introduction}

	\noi This paper is concerned with the existence of the positive solutions for the following family of quasilinear equations with $N$-Laplacian and exponential Choquard type nonlinearity
	\begin{equation*}
	\label{pq}\tag{$P_{*}$}\left\{
	\begin{array}{l}
	-\Delta_N u -\Delta_N (u^{2})u=
	\left(\displaystyle\int_{\Om}\frac{F(y,u)}{|x-y|^{\mu}}~dy\right)f(x,u) \; \text{in}\;
	\Om,\\
	u> 0~\mbox{in}~ \Om,\\
	u=0 ~\mbox{on}~ \partial\Om,
	\end{array}
	\right.
	\end{equation*}
	where $\Om$ is a smooth bounded domain in $\mathbb R^N,\; N\geq2$, $0<\mu<N$ and $\Delta_N:=\text{div}\left(|\nabla u|^{N-2}\nabla u\right)$ is called the $N$-Laplacian.  
	\noi The function $f:\Om \times \mb R\to \mb R$ is given by $f(x,s)=g(x,s)\exp(|s|^{\frac{2N}{N-1}})$, where $g \in C(\bar \Om \times \mb R)$ satisfies some appropriate assumptions described later.  The function $F(y,s)$ is the primitive of $f(y,s)$ defined as $F(y,s)=\int_0^s f(y,t)dt.$\\
	
	\noi  The problems involving the quasilinear  operator $-\Delta_p u -\Delta_p (u^{2})u,\;1<p<\infty,$ has been of interest to many researchers for long due to its significant applications in the modeling of the physical phenomenon such as in plasma physics and fluid mechanics \cite{bass}, in dissipative quantum mechanics \cite{has}, etc.
	Solutions of such equation are related to the existence of standing wave solutions for quasilinear Schr\"odinger equations of
	the form
	\begin{align}\label{l}
	iu_t =-\Delta u+V(x)u-h_1(|u|^2)u-C\Delta h_2(|u|^2)h_2'(|u|^2)u,\;\; x\in\mathbb R^N,
	\end{align}
	where V is a  potential, $C$ is a real constant, $h_1$ and $h_2$ are real functions. Depending upon the different type of  $h_2$, the quasilinear equations of the form \eqref{l} appear in  the study of mathematical physics. For example,  in the modeling of the superfluid film equation in plasma physics, $h_2(s)=s$ (see \cite{1}) and for studying the self-channeling of a high-power ultra short laser in matter $h_2(s) =\sqrt{1+s^2}$ (see \cite{33}). \\
	
	\noi The main mathematical difficulty we face in studying the problem of  type \eqref{pq} occurs due to the
	quasilinear term $\Delta_N(u^2)u$, which doesn't allow the natural energy functional corresponding to the
	problem \eqref{pq} to be  well defined for all $u\in W_0^{1,N}(\Om)$ (defined in Section \ref{sec2}). Hence, we can not apply variational method directly for such problem. To overcome this 
	inconvenience, several methods and arguments have been developed, such as the perturbation method (see for e.g.,\cite{19,24}) a constrained minimization
	technique (see for e.g., \cite{20,22,28,29}),  and a
	change of variables (see for e.g., \cite {CJ,do,8,M-do1,11,12}).
	%
	 The nonlinearity in the problem \eqref{pq}, which is nonlocal in nature, is driven by the Hardy-Littlewood-Sobolev inequality and the Trudinger-Moser inequality. Let us first recall the following well known Hardy-Littlewood-Sobolev inequality [Theorem 4.3, p.106] \cite{lieb}.
	\begin{proposition}\label{HLS}
		(\textbf {Hardy-Littlewood-Sobolev inequality}) Let $t$, $r>1$ and $0<\mu<N $ with $1/t+\mu/N+1/r=2$, $g_1 \in L^t(\mathbb R^n)$ and $g_2 \in L^r(\mathbb R^n)$. Then there exists a sharp constant $C(t,N,\mu,r)$, independent of $g_1,$ $g_2$ such that
		\begin{equation}\label{HLSineq}
		\int_{\mb R^N}\int_{\mb R^N} \frac{g_1(x)g_2(y)}{|x-y|^{\mu}}\mathrm{d}x\mathrm{d}y \leq C(t,N,\mu,r)\|g_1\|_{L^t(\mb R^N)}\|g_2\|_{L^r(\mb R^N)}.
		\end{equation}
		{ If $t =r = \textstyle\frac{2N}{2N-\mu}$ then
			\[C(t,N,\mu,r)= C(N,\mu)= \pi^{\frac{\mu}{2}} \frac{\Gamma\left(\frac{N}{2}-\frac{\mu}{2}\right)}{\Gamma\left(N-\frac{\mu}{2}\right)} \left\{ \frac{\Gamma\left(\frac{N}{2}\right)}{\Gamma(N)} \right\}^{-1+\frac{\mu}{N}}.  \]
			In this case there is equality in \eqref{HLSineq} if and only if $g_1\equiv (constant)g_2$ and
			\[g_1(x)= c_0(a^2+ |x-b|^2)^{\frac{-(2N-\mu)}{2}}\]
			for some $c_0 \in \mathbb C$, $0 \neq a \in \mathbb R$ and $b \in \mathbb R^N$.}
	\end{proposition}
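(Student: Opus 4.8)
The plan is to treat the general inequality \eqref{HLSineq} and the sharp-constant statement separately, since the latter requires considerably more work.

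For the general inequality, with $t,r>1$ and $1/t+\mu/N+1/r=2$, I would argue by duality. Setting $Tg(x):=\int_{\mathbb{R}^N}|x-y|^{-\mu}g(y)\,dy$, the estimate \eqref{HLSineq} is equivalent, with the same best constant, to the boundedness $T\colon L^{r}(\mathbb{R}^N)\to L^{t'}(\mathbb{R}^N)$, where $1/t'=1-1/t=\mu/N+1/r-1$. To prove this I would first establish the weak-type endpoint bound $\|Tg\|_{L^{t',\infty}}\le C\|g\|_{L^{r}}$ by the usual near/far splitting of the kernel: for a threshold $R>0$ write $|z|^{-\mu}=|z|^{-\mu}\chi_{\{|z|\le R\}}+|z|^{-\mu}\chi_{\{|z|>R\}}=:K_1(z)+K_2(z)$; since $\mu<N$ one has $K_1\in L^{1}(\mathbb{R}^N)$ and $K_2\in L^{s}(\mathbb{R}^N)$ for every $s>N/\mu$, so Young's inequality bounds $\|K_1\ast g\|_{L^{r}}$ and Young/H\"older bounds $K_2\ast g$ in a Lebesgue norm with exponent slightly larger than $t'$. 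Splitting the level set $\{|Tg|>\lambda\}$ accordingly and optimizing $R=R(\lambda)$ to balance the two contributions yields the weak-type bound; running this at two pairs of exponents straddling the target pair and applying the Marcinkiewicz interpolation theorem upgrades it to the strong-type inequality \eqref{HLSineq}. This part is routine; the only care required is the bookkeeping of exponents so that the interpolation segment passes through the desired point, which is precisely what the scaling identity $1/t+\mu/N+1/r=2$ encodes.

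For the sharp constant $C(N,\mu)$ and the extremals in the conformal case $t=r=\frac{2N}{2N-\mu}$, I would follow Lieb's argument. Since here $t=r$ and $|x|^{-\mu}$ has a strictly positive Fourier transform, a Cauchy--Schwarz argument shows the supremum of the bilinear form over $\|g_1\|_{L^t}=\|g_2\|_{L^r}=1$ is attained on the diagonal $g_1=g_2$; and because the kernel $|z|^{-\mu}$ is already symmetric decreasing, the Riesz rearrangement inequality lets one further assume this common function is nonnegative, radial and nonincreasing. Next I would transplant the problem to the sphere $S^{N}$ through inverse stereographic projection, turning it into a variational problem invariant under the conformal group of $S^{N}$. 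The decisive step is the \emph{competing symmetries} argument: one alternately applies symmetric-decreasing rearrangement about a fixed pole and a fixed conformal map that redistributes mass toward a different point, and shows that the iterated composition converges strongly in $L^{t}$; the limit must then be invariant under both operations, hence the stereographic pullback of a constant on $S^{N}$, i.e.\ $g_1(x)=c_0\bigl(a^2+|x-b|^2\bigr)^{-(2N-\mu)/2}$. Inserting this explicit family into the functional and simplifying the resulting Gamma-integrals produces the stated value of $C(N,\mu)$, and equality in \eqref{HLSineq} holds exactly for these functions.

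The main obstacle is the sharp-constant part, specifically the competing-symmetries step: one must choose the auxiliary conformal map so that its composition with rearrangement is ``contractive'' in the appropriate sense, prove strong --- not merely weak --- $L^{t}$-convergence of the iterates to rule out escape of mass at the poles or at infinity, and verify that the fixed points of the combined symmetrization-and-conformal operation are exactly the claimed extremals. The reduction to the sphere and the rearrangement steps are classical; in any event only the non-sharp inequality is needed for the variational analysis in this paper, and that already follows from the interpolation argument in the second paragraph.
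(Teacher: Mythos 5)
This proposition is not proved in the paper at all: it is quoted verbatim as a classical result, with the proof deferred to Lieb--Loss, \emph{Analysis}, Theorem 4.3, so there is no in-paper argument to compare against. Your sketch is, in outline, exactly the standard literature proof: duality plus the near/far kernel splitting, a weak-type bound optimized in $R$, and Marcinkiewicz interpolation for the non-sharp inequality (the bookkeeping works because $1/t'=\mu/N+1/r-1>0$ forces $r'>N/\mu$, so $K_2\in L^{r'}$; the usual shortcut is to put $K_2\ast g$ in $L^\infty$ rather than in an exponent slightly above $t'$, but either variant closes); and for the conformal case $t=r=\frac{2N}{2N-\mu}$, the positive-definiteness/Cauchy--Schwarz reduction to the diagonal, Riesz rearrangement, stereographic projection, and the Carlen--Loss competing-symmetries iteration, which is essentially how Lieb--Loss present the sharp constant and extremals. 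Two caveats at your level of detail: the competing-symmetries limit gives the maximizing family and the value of $C(N,\mu)$, but the full \emph{characterization} of equality cases additionally requires the strict Riesz rearrangement inequality (to see that any extremal is a translate/dilate of the symmetric profile), which your sketch asserts rather than derives; and for the purposes of this paper only the non-sharp inequality with $t=r=\frac{2N}{2N-\mu}$, applied to functions extended by zero outside the bounded domain $\Om$, is ever used, so the heavy second half is not needed for any argument downstream.
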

	
	
	\noi Now a days,  an ample amount of attention has been attributed to the study of Choquard type equations, which was started 
	with the seminal work of S. Pekar \cite{pekar}, where the author considered 
	the following nonlinear Schr\"{o}dinger-Newton equation:
	\begin{align}\label{sn}
		-\Delta u + V(x)u = ({\mathcal{K}}_\mu * u^2)u +\la f_1(x, u)\;\;\; \text{in}\; \mathbb R^N,
		\end{align}
	where $\la>0,$ $\mathcal{K}_\mu$ denotes  the  Riesz potential, $V:\mb R^N\to \mb R$ is a continuous function and $f_1:\mb R^N\times\mb R\to \mb R$ is a Carath\'eodory function with some appropriate growth assumptions.
	When $\la=0$, the nonlinearity in the right-hand side of \eqref{sn} is termed as  Choquard type nonlinearty. In the application point of view, such type of nonlinearity plays an important role in  describing  the Bose-Einstein
	condensation (see \cite{bose}) and also, appears in the modeling of the self gravitational
	collapse of a quantum mechanical wave function (see \cite{penrose}).  P. Choquard (see \cite{choq}) studied such elliptic equations of type \eqref{sn} for construing the quantum theory of a polaron at rest and for modeling the phenomenon of an electron being trapped in its own hole in the Hartree-Fock theory.
	When $V(x)=1,\la=0$, the equations of type \eqref{sn} were   studied rigorously in  \cite {lieb,Lions}. 
	For more extensive study  of Choquard equations, without attempting to provide a complete list, 
	we refer to \cite{moroz,moroz-main,moroz4,moroz5} and the references therein.\\
	
	\noi The main feature of the problem \eqref{pq} is that the 
	nonlinear term $f(x,t)$ has the maximal growth on $t$, that is, critical exponential growth with respect
	to the following Trudinger–Moser inequality (see \cite{Trud-Moser}):
	\begin{theorem}\label{TM-ineq}(\textbf {Trudinger–Moser inequality})
		For $N\geq 2$, $u \in W^{1,N}_0(\Om)$
		\[\sup_{\|u\|\leq 1}\int_\Om \exp(\alpha|u|^{\frac{N}{N-1}})~dx < \infty\]
		if and only if $\alpha \leq \alpha_N$, where $\alpha_N = N\omega_{N-1}^{\frac{1}{N-1}}$ and $\omega_{N-1}=$ $(N-1)$- dimensional surface area of $\mb S^{N-1}$.
	\end{theorem}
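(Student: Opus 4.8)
Since this theorem is Moser's sharpening of Trudinger's inequality, the natural route is Moser's symmetrization argument, carried out as two implications: $(i)$ finiteness of the supremum when $\alpha\le\alpha_N$, and $(ii)$ its divergence when $\alpha>\alpha_N$. For $(i)$, because $t\mapsto\exp(\alpha t)$ is increasing on $[0,\infty)$ it suffices to treat the borderline value $\alpha=\alpha_N$, and since $|\nabla|u||=|\nabla u|$ a.e. while the integrand depends only on $|u|$, I may take $u\ge0$. The first step is to replace $u$ by its Schwarz symmetrization $u^{*}$ on the ball $B_{R}$ of centre $0$ with $|B_{R}|=|\Om|$: the P\'olya--Szeg\H{o} inequality gives $\|\nabla u^{*}\|_{L^{N}}\le\|\nabla u\|_{L^{N}}\le1$, and equimeasurability gives $\int_{\Om}\exp(\alpha_N|u|^{N/(N-1)})\,dx=\int_{B_{R}}\exp(\alpha_N|u^{*}|^{N/(N-1)})\,dx$, so it is enough to bound the latter over all radially nonincreasing $u=u(r)$ with $u(R)=0$ and $\omega_{N-1}\int_{0}^{R}|u'(r)|^{N}r^{N-1}\,dr\le1$.

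Next I would apply Moser's logarithmic substitution $r=Re^{-s}$, $s\in[0,\infty)$, and put $w(s)=\omega_{N-1}^{1/N}\,u(Re^{-s})\ge0$. A short computation shows that the Dirichlet bound becomes $w(0)=0$, $\int_{0}^{\infty}|w'(s)|^{N}\,ds\le1$, and that, using $\omega_{N-1}R^{N}=N|\Om|$,
\[
\int_{B_{R}}\exp\!\big(\alpha_N|u|^{\frac{N}{N-1}}\big)\,dx=N|\Om|\int_{0}^{\infty}\exp\!\Big(N\,w(s)^{\frac{N}{N-1}}-Ns\Big)\,ds;
\]
the value $\alpha_N=N\omega_{N-1}^{1/(N-1)}$ is exactly what normalizes the coefficient of $w^{N/(N-1)}$ to $N$. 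Thus $(i)$ reduces to the one-dimensional \emph{Moser lemma}: there is $C_{N}<\infty$ depending only on $N$ with $\int_{0}^{\infty}\exp\big(N\,w(s)^{N/(N-1)}-Ns\big)\,ds\le C_{N}$ whenever $w(0)=0$ and $\int_{0}^{\infty}|w'|^{N}\le1$; granting this, the supremum in the theorem is at most $N|\Om|\,C_{N}$.

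For the Moser lemma, H\"older's inequality on $[0,s]$ gives $w(s)\le A(s)^{1/N}s^{(N-1)/N}$ with $A(s):=\int_{0}^{s}|w'|^{N}\le1$, so the exponent obeys $N\,w(s)^{N/(N-1)}-Ns\le Ns\big(A(s)^{1/(N-1)}-1\big)\le0$ pointwise. If $A(\infty)\le1-\varepsilon$ for a small dimensional constant $\varepsilon$ this already yields decay $\le-cs$ and a finite integral; otherwise, letting $s_{1}$ be the first instant with $A(s_{1})=1-\varepsilon$, on $[0,s_{1}]$ the same bound gives decay $\le-cs$, while on $[s_{1},\infty)$ one writes $w(s)=w(s_{1})+\int_{s_{1}}^{s}w'$, uses that the leftover budget $\int_{s_{1}}^{\infty}|w'|^{N}\le\varepsilon$ is small and that $w(s_{1})^{N/(N-1)}\le(1-\varepsilon)^{1/(N-1)}s_{1}$ is bounded away from $s_{1}$, and applies an elementary convexity inequality for $t\mapsto t^{N/(N-1)}$ to force $N\,w(s)^{N/(N-1)}-Ns\le-c(s-s_{1})$; adding the two pieces gives $C_{N}$. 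I expect this one-dimensional estimate --- together with the bookkeeping needed to confirm that no constant is lost along the chain symmetrization $\to$ substitution $\to$ H\"older --- to be the genuine obstacle, since it is precisely what pins down the sharp exponent $\alpha_N$.

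Finally, for $(ii)$ I would test against the Moser functions on a ball $B_{R}$ compactly contained in $\Om$,
\[
m_{k}(x)=\frac{1}{\omega_{N-1}^{1/N}}
\begin{cases}
(\log k)^{\frac{N-1}{N}}, & |x|\le R/k,\\
(\log k)^{-1/N}\log\!\big(R/|x|\big), & R/k\le|x|\le R,\\
0, & |x|\ge R,
\end{cases}
\]
extended by $0$ on $\Om$. One checks directly that $\|m_{k}\|=\|\nabla m_{k}\|_{L^{N}}=1$, while on $B_{R/k}$ one has $\exp(\alpha|m_{k}|^{N/(N-1)})=k^{\alpha/\omega_{N-1}^{1/(N-1)}}$, so
\[
\int_{\Om}\exp\!\big(\alpha|m_{k}|^{\frac{N}{N-1}}\big)\,dx\ \ge\ \frac{\omega_{N-1}R^{N}}{N}\,k^{\,\alpha/\omega_{N-1}^{1/(N-1)}-N},
\]
whose right-hand side diverges as $k\to\infty$ exactly when $\alpha/\omega_{N-1}^{1/(N-1)}-N>0$, i.e. when $\alpha>\alpha_N$. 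Together with $(i)$ this shows the supremum is finite if and only if $\alpha\le\alpha_N$, which is the assertion of the theorem.
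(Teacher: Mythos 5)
First, a point of comparison: the paper does not prove Theorem \ref{TM-ineq} at all --- it is quoted as a classical result with the citation \cite{Trud-Moser} and is only used as a tool --- so your proposal is really a reconstruction of Moser's original argument. In outline that reconstruction is the right one, and large parts of it are correct and complete: the reduction by Schwarz symmetrization and P\'olya--Szeg\H{o}, the logarithmic substitution (your normalization check that $\alpha_N=N\omega_{N-1}^{1/(N-1)}$ produces the exponent $Nw^{N/(N-1)}-Ns$ and the factor $N|\Om|$ is accurate), and the sharpness direction via the Moser functions, which is exactly the computation the paper itself performs with its sequence $\mc M_k$ in the proof of Lemma \ref{PS-level}.

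The genuine gap is in your sketch of the one-dimensional Moser lemma, which is the only place where the critical case $\alpha=\alpha_N$ is decided (for $\alpha<\alpha_N$ your pointwise bound $Nw(s)^{N/(N-1)}\le Ns$ already gives linear decay, so the whole content of the theorem sits in this lemma). Your two-regime argument does not close as written. On $[s_1,\infty)$ you have $w(s)\le w(s_1)+\varepsilon^{1/N}(s-s_1)^{(N-1)/N}$ with $w(s_1)^{N/(N-1)}\le(1-\varepsilon)^{1/(N-1)}s_1$ and remaining budget exactly $\varepsilon$, and you propose to conclude $Nw(s)^{N/(N-1)}-Ns\le -c(s-s_1)$ by the elementary inequality $(a+b)^{p}\le\theta^{1-p}a^{p}+(1-\theta)^{1-p}b^{p}$, $p=\frac{N}{N-1}$, $\theta\in(0,1)$. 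Since $s_1$ can be arbitrarily large, you must take $\theta^{-1/(N-1)}(1-\varepsilon)^{1/(N-1)}\le 1$, i.e.\ $\theta\ge 1-\varepsilon$, to prevent an unbounded positive contribution proportional to $s_1$; but then $(1-\theta)^{-1/(N-1)}\varepsilon^{1/(N-1)}\ge 1$, so the coefficient of $(s-s_1)$ in your upper bound for the exponent is $\ge 0$. The two requirements balance exactly (equality at $\theta=1-\varepsilon$), so the best this bookkeeping yields is exponent $\le 0$ on an interval of possibly infinite length, which gives no integrability --- and no choice of the small parameters $\varepsilon,\delta$ rescues it, because the leftover budget is exactly the amount needed to saturate the convexity inequality. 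This is precisely why the borderline exponent is delicate: the lemma is true, but its proof requires Moser's finer quantitative argument (for instance, controlling the measure of the set where $w(s)^{N/(N-1)}\ge s-c$, using that near-equality in H\"older on $[0,s]$ for many $s$ forces $|w'|$ to be nearly constant and hence contradicts $A(s_1)=1-\varepsilon$), or an equivalent rearrangement/Adams-type estimate. As it stands, your proposal proves finiteness for every $\alpha<\alpha_N$ and divergence for $\alpha>\alpha_N$, but not the critical assertion at $\alpha=\alpha_N$.
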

\noi  Here the Sobolev space $W^{1,N}_0(\Om)$ and the corresponding norm $\|\cdot\|$
	are defined in Section \ref{sec2}. The critical growth non-compact problems associated to this inequality are initially studied by the work of  Adimurthi \cite{adi} and de Figueiredo et al. \cite{DR}. More recently, authors in \cite{GPS, GS1, GS2} studied the existence of  multiple positive solutions for quasilinear equations involving exponential nonlinearities. 
	 In the case $1<p<N$, the nonlinearity is of polynomial growth and the critical growth is $ t^{2p^*},$ where  $p^*=Np/(N-p)$ (see \cite{deng,8,27}).
	When $p=2$, the problem of type \eqref{pq} without the convolution term, that is the equation $$-\Delta u -\Delta (u^{2})u + V(x) u=f(x, u) \,\; \text{in}\;\; \mb R^2,$$ where $V:\mb R^N\to \mb R$ is a continuous potential and $f:\mb R^2\times\mb R\to \mb R$ is  a continuous function with some suitable assumption and is having critical exponential growth ($\exp(\al s^{4})$), was studied by Marcus do Ó et al. in \cite{do}. Later, Wang et al. \cite{wang} extended these results to the case of $p=N\geq2.$ \\
	
		%
	
	
	\noi Involving critical exponential Choquard type nonlinearity and Laplacian operator, Alves et al. \cite{yang-JDE} studied  the following problem 
	\[-\e^2\Delta u+V(x)u=\left(\displaystyle\int_{\mb R^2}\frac{F(y,u)}{|x-y|^{\mu}}~dy\right)f(x,u),\;\;\; x\in\mb R^2,\] where  $\e>0$, $0<\mu<2$, $V:\mb R^2\to\mb R$ is the continuous potential function with some particular properties and the continuous function $f:\mb R^2\times\mb R\to\mb R$ enjoys the  critical exponential growth with some appropriate assumptions. Consequently, for the higher dimension, that is for $N\geq2$, authors in \cite{AGMS} discussed the  Kirchhoff- Choquard problems involving the $N$-Laplacian with critical exponential growth.\\
	
	\noi Inspired from all the above mentioned works, in this article, we investigate the existence results for the problem \eqref{pq}, involving the Choquard type  critical nonlinearity motivated by the above inequality  \eqref{HLSineq}. The main contribution in this work is to identify the first critical level for this problem and study the Palais-Smale sequences below this level.  Unlike as in the case of critical exponential problem involving $N$-Laplacian, where we generally consider the critical exponential growth as $\exp(|t|^{N/(N-1)})$, in our problem \eqref{pq}, due to the presence of the quasilinear term $\De_N(u^2)u$ in the principal operator, the critical  growth in the critical dimension is $\exp(|t|^{2{N/(N-1)}})$. It turns out that the  first critical level  reduces by half (see Lemma \ref{PS-level}). In this sense, the analysis presented here is new even for the equations without the presence of the nonlocal term ($\int_\Om F(y,u) |x-y|^{-\mu} dy$) . Once we identify the first critical level, we prove some technically involved estimates to show the existence of non-trivial weak limit for Palais-Smale sequences at the mountain-pass min-max level. 
	We would like to mention that to the best of our knowledge, there is no work on the existence of positive solutions to the elliptic equations involving quasilinear operator  and critical exponential Choquard type nonlinearity. In this article, we study such equations for the first time. Our result is new even for the case $N=2$.\\
	
	%
	\noi We now state all  the hypotheses imposed  on the continuous function $f:\Om\times\mb R\to\mb R$, given by $f(x,s)= g(x,s)\exp\left(s^{\frac{2N}{N-1}}\right)$:
	\begin{enumerate}
		\item[$(f_1)$] $g\in C(\overline{\Om}\times \mb R)$ such that for each $x\in\overline\Om$,  $g(x,s)=0,$ for all $s\le 0$ and $g(x,s)>0,$ \text{for all}
		$s>0$.
		\item[$(f_2)$]  $\displaystyle\lim_{s\to0}\frac{f(x,s)}{s^{N-1}}=0$, uniformly in $x\in\Om$.
		\item[$(f_3)$] For any $\e>0$, $\displaystyle\lim_{s\to\infty}\sup_{x\in\overline\Om}g(x,s)\exp\left(-\e|s|^{\frac{2N}{N-1}}\right)=0$ and $\displaystyle\lim_{s\to\infty}\inf_{x\in\overline\Om}g(x,s) {\exp(\e |s|^{\frac{2N}{N-1}})}=\infty$.
		\item[$(f_4)$] There exist positive constants  $s_0$ and $m_0$ such that
		\[ 0<s^{m_0} F(x,s)\le M_0 f(x,s)\;\mbox{for all}\; (x,s)\in \mb R^2\times[s_0,+\infty).\]
		\item[$(f_5)$]  There exists $\tau>N$  such that $0<\tau F(x,s)\leq f(x,s)s,$ for all $ (x,s)\in \Om\times (0,\infty).$
		\item[$(f_6)$] We assume that\begin{equation}\label{h-growth}
		\displaystyle \lim_{s\to +\infty} \frac{sf(x,s)F(x,s)}{\exp\left(2 |s|^{\frac{2N}{N-1}}\right)} = \infty,\mbox{ uniformly in }x \in \overline{\Om}.
		\end{equation}
	\end{enumerate}
	\begin{example}
		Consider $f(x,s)= g(x,s)e^{|s|^{\frac{2N}{N-1}}}$, where $g(x,s)=\left\{\begin{array}{lr}
		{t^{a_0+ \left({N}-1\right)} \exp(d_0s^{r})},\; \mbox{if} \; s> 0\\
		0, \; \mbox{if} \; s\leq 0	\end{array}\right.$
		for some $a_0>0$, $0<d_0\leq \alpha_{N}$ and $1\leq r <\frac{2N}{N-1}$. Then $f$ satisfies all the conditions from $(f_1)$-$(f_6)$.
	\end{example}
	Our  main result reads as following:
	
	\begin{theorem}\label{T1}
		Let $\Om\subset\mathbb R^N\; (N\geq2)$ be a smooth bounded domain and let the hypotheses $(f_1)$-$(f_6)$ hold. Then the problem \eqref{pq} has a non trivial positive weak solution.
	\end{theorem}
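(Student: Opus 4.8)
The plan is to remove the quasilinear obstruction by a change of variables, recast the problem as a mountain--pass problem for a $C^1$ functional on $W_0^{1,N}(\Om)$, and then analyze the Palais--Smale sequences below a suitably chosen first critical level. Concretely, I would introduce $f_*\colon\R\to\R$ defined on $[0,\infty)$ by the Cauchy problem $f_*'(t)=\bigl(1+2^{N-1}|f_*(t)|^{N}\bigr)^{-1/N}$, $f_*(0)=0$, extended as an odd function; this is the analogue for $-\Delta_N u-\Delta_N(u^2)u$ of the change of variables classically used for quasilinear Schr\"odinger equations, chosen precisely so that $\int_\Om\bigl(1+2^{N-1}|f_*(v)|^{N}\bigr)|\nabla f_*(v)|^{N}\,dx=\int_\Om|\nabla v|^{N}\,dx$. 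The map $f_*$ is a smooth increasing diffeomorphism of $\R$ with $f_*(t)\sim t$ as $t\to0$ and $f_*(t)\sim 2^{1/(2N)}t^{1/2}$ as $t\to\infty$, and it obeys the usual algebraic inequalities ($|f_*(t)|\le|t|$, $|f_*(t)|\le C|t|^{1/2}$, $|t f_*'(t)|\le|f_*(t)|$, and $tf_*'(t)\ge\frac12 f_*(t)$ for $t\ge0$). Setting $u=f_*(v)$ turns \eqref{pq} into
\begin{equation*}
-\Delta_N v=\Bigl(\int_\Om\frac{F(y,f_*(v))}{|x-y|^{\mu}}\,dy\Bigr)f(x,f_*(v))\,f_*'(v)\quad\text{in }\Om,\qquad v=0\ \text{on }\partial\Om,
\end{equation*}
whose energy functional
\begin{equation*}
I(v)=\frac1N\|v\|^{N}-\frac12\int_\Om\int_\Om\frac{F(x,f_*(v(x)))\,F(y,f_*(v(y)))}{|x-y|^{\mu}}\,dx\,dy
\end{equation*}
is well defined and $C^1$ on $W_0^{1,N}(\Om)$: using $(f_1)$--$(f_3)$ and $|f_*(v)|\le C|v|^{1/2}$ one gets $\exp\bigl(|f_*(v)|^{2N/(N-1)}\bigr)\le\exp\bigl(C|v|^{N/(N-1)}\bigr)$, hence $F(\cdot,f_*(v))\in L^{r}(\Om)$ for every $r\ge1$ (recall $\exp(\alpha|w|^{N/(N-1)})\in L^1(\Om)$ for all $\alpha>0$ when $w\in W_0^{1,N}(\Om)$, by Theorem \ref{TM-ineq}), and the double integral is finite by Proposition \ref{HLS}. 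A nontrivial critical point $v$ of $I$ then yields a weak solution $u=f_*(v)$ of \eqref{pq}.

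\emph{Mountain--pass geometry.} From $(f_1)$, $(f_2)$ together with Proposition \ref{HLS} and Theorem \ref{TM-ineq} I would obtain $\rho,\beta>0$ with $I(v)\ge\beta$ whenever $\|v\|=\rho$. From the Ambrosetti--Rabinowitz condition $(f_5)$, which through the identities for $f_*$ forces $F(x,f_*(t))\gtrsim|t|^{\tau/2}$ at infinity, the Choquard term along a ray $t\mapsto te$ (with $e>0$ fixed) grows like $t^{\tau}$ and dominates $\frac{t^N}{N}\|e\|^N$ since $\tau>N$, so $I(te)\to-\infty$ and there is $e$ with $\|e\|>\rho$, $I(e)<0$. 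Set $c=\inf_{\gamma\in\Gamma}\max_{t\in[0,1]}I(\gamma(t))$ with the standard class $\Gamma$; then $c\ge\beta>0$.

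\emph{Level estimate (the main obstacle).} The heart of the argument — this is Lemma \ref{PS-level} — is to show $c<c_*$ for an explicitly computable first critical level $c_*$. Because the effective nonlinearity sits at the exponent $2N/(N-1)$, so that after the change of variables the exponential weight becomes $\exp\bigl(2^{1/(N-1)}|v|^{N/(N-1)}\bigr)$, and because the Choquard term is quadratic in $F$, the threshold $c_*$ is precisely one half of the value appearing for the local $N$-Laplacian problem, multiplied by the usual Choquard factor $\frac{2N-\mu}{2N}$ — this is the ``reduction by half'' highlighted in the introduction. I would establish $c<c_*$ by inserting the truncated, $W_0^{1,N}$-normalized Moser functions concentrating at an interior point of $\Om$ into the mountain--pass path and estimating the resulting maximum, using the lower asymptotics in $(f_3)$, the sharp growth hypothesis $(f_6)$ (whose exponent $\exp(2|t|^{2N/(N-1)})$ matches exactly the concentration of $F(x,\cdot)F(y,\cdot)$ when the two arguments coincide), and $(f_4)$ to bound $F$ in terms of $f$. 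Carrying out these estimates simultaneously in the presence of the double convolution, the critical exponential term, and the sublinear behaviour of $f_*$ at infinity is the technically delicate step of the proof.

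\emph{Compactness and conclusion.} For a Palais--Smale sequence $(v_n)$ at level $c$, testing $I'(v_n)$ with $v_n$ and using $(f_5)$ shows $(v_n)$ is bounded in $W_0^{1,N}(\Om)$, so up to a subsequence $v_n\rightharpoonup v$, $v_n\to v$ in every $L^q(\Om)$ and a.e. Passing to the limit in $I'(v_n)\varphi=o(1)$ for $\varphi\in C_c^\infty(\Om)$ — which requires that, thanks to $c<c_*$ and a Lions-type refinement of Theorem \ref{TM-ineq}, $\exp\bigl(q|f_*(v_n)|^{2N/(N-1)}\bigr)$ be bounded in $L^1(\Om)$ for some $q>1$, whence $F(\cdot,f_*(v_n))$ and $f(\cdot,f_*(v_n))f_*'(v_n)$ are equi-integrable and, by Proposition \ref{HLS}, Vitali's theorem and a nonlocal Brezis--Lieb argument, the convolution terms converge — shows that $v$ is a weak solution of the transformed equation. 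That $v\not\equiv0$ follows from $c\ge\beta>0$: if $v\equiv0$, then $v_n\to0$ in every $L^q(\Om)$, the equi-integrability above forces the Choquard term and, via $I'(v_n)v_n=o(1)$, the norm $\|v_n\|$ to vanish, so $c=\lim I(v_n)=0$, a contradiction. Finally, testing the transformed equation with $v^-$ and using $(f_1)$ (the right-hand side vanishes where $v\le0$) gives $v\ge0$, hence $u=f_*(v)\ge0$; elliptic regularity together with the strong maximum principle for the $N$-Laplacian upgrades this to $u>0$ in $\Om$, which proves Theorem \ref{T1}.
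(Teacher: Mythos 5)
Your overall strategy coincides with the paper's: the same change of variables $h$ (your $f_*$), the same transformed functional $J$, the same mountain--pass geometry, the same first critical level $\frac{1}{2N}\bigl(\frac{2N-\mu}{2N}\alpha_N\bigr)^{N-1}$ obtained by inserting Moser functions and using $(f_6)$, and the same argument that a vanishing weak limit would force $\lim\|v_n\|^N=Nc$ below the Trudinger--Moser threshold and hence $c=0$, a contradiction. However, your compactness step has two genuine gaps. First, you never address the convergence of the principal part: to conclude that the weak limit $v$ solves the transformed equation you must pass to the limit in $\int_\Om|\nabla v_n|^{N-2}\nabla v_n\cdot\nabla\varphi\,dx$, and for $N\ge 3$ weak convergence $v_n\rightharpoonup v$ in $W_0^{1,N}(\Om)$ does not give $|\nabla v_n|^{N-2}\nabla v_n\rightharpoonup|\nabla v|^{N-2}\nabla v$; one needs almost everywhere convergence of the gradients, which is exactly what the paper proves in its Appendix (Lemma \ref{wk-sol}) via a concentration--compactness argument, and this a.e.\ gradient convergence is also what makes the limiting Radon measure absolutely continuous in Lemma \ref{kc-ws}. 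Your sketch silently assumes this step.

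Second, your mechanism for handling the nonlinear terms is not justified as stated: you claim that ``$c<c_*$ and a Lions-type refinement'' yield a uniform bound on $\int_\Om\exp\bigl(q|f_*(v_n)|^{2N/(N-1)}\bigr)dx$ for some $q>1$ along the whole Palais--Smale sequence, and deduce equi-integrability of $F(\cdot,f_*(v_n))$ and $f(\cdot,f_*(v_n))f_*'(v_n)$. A Lions-type improvement requires quantitative control of $\|v\|$ relative to $\limsup\|v_n\|$, which you do not have at this stage (you cannot assume $v=0$, since the whole point is to identify $v$ as a solution before knowing whether it vanishes); in general, for critical exponential problems the nonlinearity along a PS sequence is \emph{not} uniformly exponentially integrable, and the level bound is used only to exclude the trivial weak limit. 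The paper circumvents this precisely by working with $L^1$-type information: the bound \eqref{wk-sol100}, the truncation arguments of Lemma \ref{PS-ws} based on $(f_4)$ and Lemma \ref{L1}-$(h_4)$, the test functions $\psi/(1+w_k)$ giving an $L^1_{\mathrm{loc}}$ bound on the nonlocal term, and then Vitali/dominated convergence plus the Radon--Nikodym argument. To make your proof complete you would need to replace the Lions-based claim by such $L^1$/truncation arguments (or prove the missing norm comparison), and supply the a.e.\ gradient convergence lemma; the remaining parts (boundedness of PS sequences via $(f_5)$ and $(h_4)$, the level estimate, nontriviality, nonnegativity, and regularity plus the strong maximum principle for positivity) are sound and parallel the paper.
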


	\textbf{Notation.}  Throughout this paper, we make use of the following notations:
	\begin{itemize}
		\item If $u$ is a measurable function, we denote  the positive  and negative parts by $u^{+}=\max\left\{u,0\right\}$ and $u^{-}=\max\left\{-u,0\right\}$, respectively. 
		\item If $A$ is a measurable set in $\mathbb{R}^{N}$, we the Lebesgue measure of $A$  by $\vert A \vert$ . 
		\item The arrows $\rightharpoonup $ , $\to $ denote weak convergence,  strong convergence, respectively.
		\item $B_r(x)$ denotes the ball of radius $r>0$ centered at $x\in \Om.$
		\item  $\overline{B_r(x)}$ denotes the closure of the set $B_r(x)$ with respect to $W_0^{1,N}(\Om)$-norm topology.
		\item  $\overline{\mc S}$ denotes the closure of a set $\mc S\subset \mb R^N$.
\item $A\subset\subset B$ implies $\oline A$ is compact in $B$.
	\item $c,C_{1},C_{2},\cdots, \tilde C_1, \tilde C_2,\cdots, C$ and $\tilde C$ denote (possibly different from line to line) positive constants.
	\end{itemize}
	
	\section{Preliminaries and Variational Set-up}\label{sec2}
	For $u:\Om\to\mathbb R$, measurable function, and  for $1\leq p \leq \infty$, we define the Lebesgue space $L^{p}(\Om)$ as 
	$$L^{p}(\Om)=\{u:\Om\to\mathbb R\;\; \text{measurable}| \int_\Om| u|^p dx<\infty\}$$ equipped with the usual norm denoted by $\Vert u\Vert_{L^p(\Om)}$.
	Now the Sobolev space $W^{1,p}_0(\Om)$ is defined as 
	$$W^{1,p}_0(\Om)=\{u\in L^p(\Om)| \int_\Om|\nabla u|^p dx<\infty\}$$ which is  endowed with  the norm
	$$\|u\|:=\displaystyle \left({\int_{\Om} |\nabla u|^p dx}\right)^{1/p}.$$
We have that the embedding $W^{1,N}_0(\Om)\ni u \mapsto \exp(|u|^\beta)\in L^1(\Om)$ is compact for all $\beta \in \left[1, \frac{N}{N-1}\right)$ and is continuous for $\beta = \frac{N}{N-1}$. Consequently, the map $\mc M: W^{1,N}_0(\Om) \to L^q(\Om)$, for $q \in [1,\infty)$, defined by $\mc M(u):= \exp\left( |u|^{\frac{N}{N-1}}\right)$ is continuous with respect to the norm topology.
	
\noi	The  natural energy functional  associated to the problem \eqref{pq} is the following:
	\begin{align*}I(u)=\frac{1}{N}\displaystyle\int_{\Om }(1+2^{N-1}|u|^{N})|\nabla u|^{N}dx-\frac 12\int_{\Om}\int_{\Om} \frac{F(y,u(y))F(x,u(x))}{|x-y|^\mu}dxdy.\end{align*}
	
\noi	Observe that, the functional $I$ is not well defined in $W^{1,N}_0(\Om)$ due to the fact that $\displaystyle\int_{\Om} u^{N}|\nabla u|^{N}dx$ is not finite for all $u\in W_0^{1,N}(\Om) $. So, it is difficult to apply variational methods directly in our problem \eqref{pq}. In order to overcome this difficulty, we apply the following change of variables which was introduced in \cite{CJ}, namely, $w:=h^{-1}(u),$ where $h$ is defined by
	\begin{equation}\label{g}
	\left\{
	\begin{array}{l}
	h^{\prime}(s)=\displaystyle\frac{1}{\left(1+2^{N-1}|h(s)|^{N}\right)^{\frac{1}{N}}}~~\mbox{in}~~ [0,\infty),\\
	h(s)=-h(-s)~~\mbox{in}~~ (-\infty,0].
	\end{array}
	\right.
	\end{equation}
	
\noi	Now we gather some properties of $h$, which we follow throughout in this article. For the detailed proofs of such results, one can see  \cite{CJ,do}.
	\begin{lemma}\label{L1} The function $h$ satisfies the following properties:
		\begin{itemize}
			\item[$(h_1)$] $h$ is uniquely defined, $C^{\infty}$ and invertible;
			\item[$(h_2)$] $h(0)=0$;
			\item[$(h_3)$] $0<h^{\prime}(s)\leq 1$ for all $s\in \mathbb{R}$;
			\item[$(h_4)$] $\frac{1}{2}h(s)\leq sh^{\prime}(s)\leq h(s)$ for all $s>0$;
			\item[$(h_5)$] $|h(s)|\leq |s|$ for all $s\in \mathbb{R}$;
			\item[$(h_6)$] $|h(s)|\leq 2^{1/{(2N)}}|s|^{1/2}$ for all $s\in \mathbb{R}$;
			\item[$(h_7)$] $\ds \lim_{s\to+\infty}{h(s)}/{s^{\frac 12}}=2^{\frac{1}{2N}}$;
			\item[$(h_8)$]  $|h(s)|\geq h(1)|s|$ for $|s|\leq 1$ and $|h(s)|\geq h(1)|s|^{1/2}$ for $|s|\geq 1$;
			\item[$(h_9)$] $h^{\prime \prime}(s)<0$ when $s>0$ and $h^{\prime \prime}(s)>0$ when $s<0$.
		\end{itemize}
		
	\end{lemma}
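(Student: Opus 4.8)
The plan is to read off $(h_1)$--$(h_9)$ directly from the initial value problem \eqref{g}, using elementary ODE theory together with two monotonicity facts about an auxiliary function. For $(h_1)$--$(h_3)$: on $[0,\infty)$ the right-hand side $s\mapsto (1+2^{N-1}|h|^{N})^{-1/N}$ is a smooth, bounded, locally Lipschitz function of $h$, so by the Picard--Lindel\"of theorem there is a unique local $C^{1}$ solution with $h(0)=0$; the a priori bound $0<h'(s)\le 1$ prevents blow-up, so the solution is global, and differentiating \eqref{g} repeatedly bootstraps $h\in C^{\infty}$. The estimate $(h_3)$ is immediate since the denominator in \eqref{g} lies in $[1,\infty)$; strict positivity of $h'$ makes $h$ strictly increasing, hence invertible; this, together with the odd extension in \eqref{g}, gives $(h_1)$ and $(h_2)$, and also $(h_5)$ after integrating $h'\le 1$.

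The key device for the remaining properties is to pass to the inverse function. For $y\ge 0$ put $\phi(t):=(1+2^{N-1}t^{N})^{1/N}$; then $s=h^{-1}(y)$ satisfies $ds/dy=1/h'(s)=\phi(y)$, so
\[
s=h^{-1}(y)=\int_{0}^{y}\phi(t)\,dt,\qquad y=h(s)\ge 0 .
\]
Everything now follows from: (i) $\phi$ is increasing, and (ii) $\phi(t)/t=(t^{-N}+2^{N-1})^{1/N}$ is decreasing on $(0,\infty)$. From (i), $\int_{0}^{y}\phi\le y\phi(y)$, which is exactly $sh'(s)\le h(s)$; from (ii), $\phi(t)\ge \frac{\phi(y)}{y}\,t$ for $0<t\le y$, whence $\int_{0}^{y}\phi\ge \frac{1}{2}y\phi(y)$, i.e. $sh'(s)\ge \frac{1}{2}h(s)$; together these are $(h_4)$ (extended to $s<0$ by oddness). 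Comparing $\phi$ with its leading part gives the sharp growth statements: $\phi(t)\ge 2^{(N-1)/N}t$ yields $s\ge 2^{-1/N}y^{2}$, hence $|h(s)|\le 2^{1/(2N)}|s|^{1/2}$, i.e. $(h_6)$; and $\phi(y)/y\to 2^{(N-1)/N}$ as $y\to\infty$ gives, by L'H\^{o}pital applied to $s(y)/y^{2}$, that $s(y)/y^{2}\to 2^{-1/N}$, equivalently $h(s)/s^{1/2}\to 2^{1/(2N)}$, which is $(h_7)$.

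For $(h_9)$, differentiate $h'=(1+2^{N-1}h^{N})^{-1/N}$ on $(0,\infty)$ and use $(1+2^{N-1}h^{N})^{-1/N}=h'$ to collapse the exponents; this yields the closed form $h''(s)=-2^{N-1}h(s)^{N-1}h'(s)^{N+2}<0$ for $s>0$, and since $h$ is odd, $h''$ is odd, so $h''(s)>0$ for $s<0$. Finally $(h_8)$ follows from $(h_4)$: the upper bound in $(h_4)$ says $(h(s)/s)'=(sh'(s)-h(s))/s^{2}\le 0$, so $h(s)/s$ is nonincreasing on $(0,\infty)$ and thus $h(s)\ge h(1)s$ for $0<s\le 1$; the lower bound in $(h_4)$ says $(h(s)/\sqrt{s})'=s^{-3/2}(sh'(s)-\frac{1}{2}h(s))\ge 0$, so $h(s)/\sqrt{s}$ is nondecreasing and thus $h(s)\ge h(1)\sqrt{s}$ for $s\ge 1$; the cases $s<0$ follow from oddness.

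I do not anticipate a real obstacle: the whole lemma is ODE bookkeeping once the change of variables $y\mapsto\int_{0}^{y}\phi$ is in place. The one step that genuinely requires an idea is the lower bound $sh'(s)\ge\frac{1}{2}h(s)$ in $(h_4)$ --- it fails without the observation that $\phi(t)/t$ is monotone --- and $(h_6)$, $(h_7)$, $(h_8)$ are then short corollaries of $(h_4)$ and $(h_3)$. It is also worth recording along the way that $h'(0)=1$ and $h''(0^{+})=0$ (since $N\ge 2$), so $h$ is indeed $C^{\infty}$ across the origin after the odd extension, and that the concavity in $(h_9)$ is strict for every $s\ne 0$.
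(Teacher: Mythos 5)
The paper itself does not prove this lemma; it simply refers the reader to \cite{CJ,do}. Your proposal is therefore a genuinely self-contained alternative, and its core is sound: passing to the inverse $h^{-1}(y)=\int_0^y\phi(t)\,dt$ with $\phi(t)=(1+2^{N-1}t^N)^{1/N}$ is exactly the right mechanism. The two monotonicity facts ($\phi$ increasing, $\phi(t)/t=(t^{-N}+2^{N-1})^{1/N}$ decreasing) do yield both halves of $(h_4)$; the comparison $\phi(t)\ge 2^{(N-1)/N}t$ gives $(h_6)$; L'H\^opital applied to $s(y)/y^2$ gives $(h_7)$; the closed form $h''=-2^{N-1}h^{N-1}(h')^{N+2}$ is correct and settles $(h_9)$; and deducing $(h_8)$ from the monotonicity of $h(s)/s$ and $h(s)/\sqrt{s}$ (both consequences of $(h_4)$) is a clean touch. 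Properties $(h_2)$, $(h_3)$, $(h_5)$ are immediate as you say, and existence, uniqueness and global definition via Picard--Lindel\"of plus the a priori bound $0<h'\le1$ are fine.

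The one genuine gap is the smoothness claim of $(h_1)$ at the origin. Away from $s=0$ your bootstrap works, and for $N$ even the map $h\mapsto(1+2^{N-1}|h|^N)^{-1/N}$ is smooth globally, so $h\in C^\infty(\mathbb R)$ there. But for $N$ odd this map is only $C^{N-1}$ near $h=0$, the bootstrap stalls at finite order, and the odd extension is in fact \emph{not} $C^\infty$ at $0$: near $0^+$ one has $h(s)=s-\tfrac{2^{N-1}}{N(N+1)}s^{N+1}+o(s^{N+1})$, so from your own identity $h''=-2^{N-1}h^{N-1}(h')^{N+2}$ one gets $h^{(N+1)}(0^+)=-2^{N-1}(N-1)!\neq0$, while oddness of $h$ makes $h^{(N+1)}$ an odd function (since $N+1$ is even), forcing $h^{(N+1)}(0^-)=-h^{(N+1)}(0^+)$; the $(N+1)$-st derivative jumps. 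Your closing remark that $h'(0)=1$ and $h''(0^+)=0$ make $h$ ``$C^\infty$ across the origin'' is not a valid inference --- matching a finite number of derivatives never yields $C^\infty$. The honest statement is $h\in C^\infty(\mathbb R\setminus\{0\})\cap C^{N}(\mathbb R)$ in general (with full $C^\infty$ when $N$ is even), which is all the paper ever uses, since only $h\in C^1$ together with $(h_2)$--$(h_9)$ enters the later arguments; so the defect is in the justification (and in the lemma as customarily quoted for general $N$), not in your overall strategy.
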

	\begin{example}
		One of the example of such functions is given in 
		\begin{figure}[H]
			\includegraphics[width=80mm]{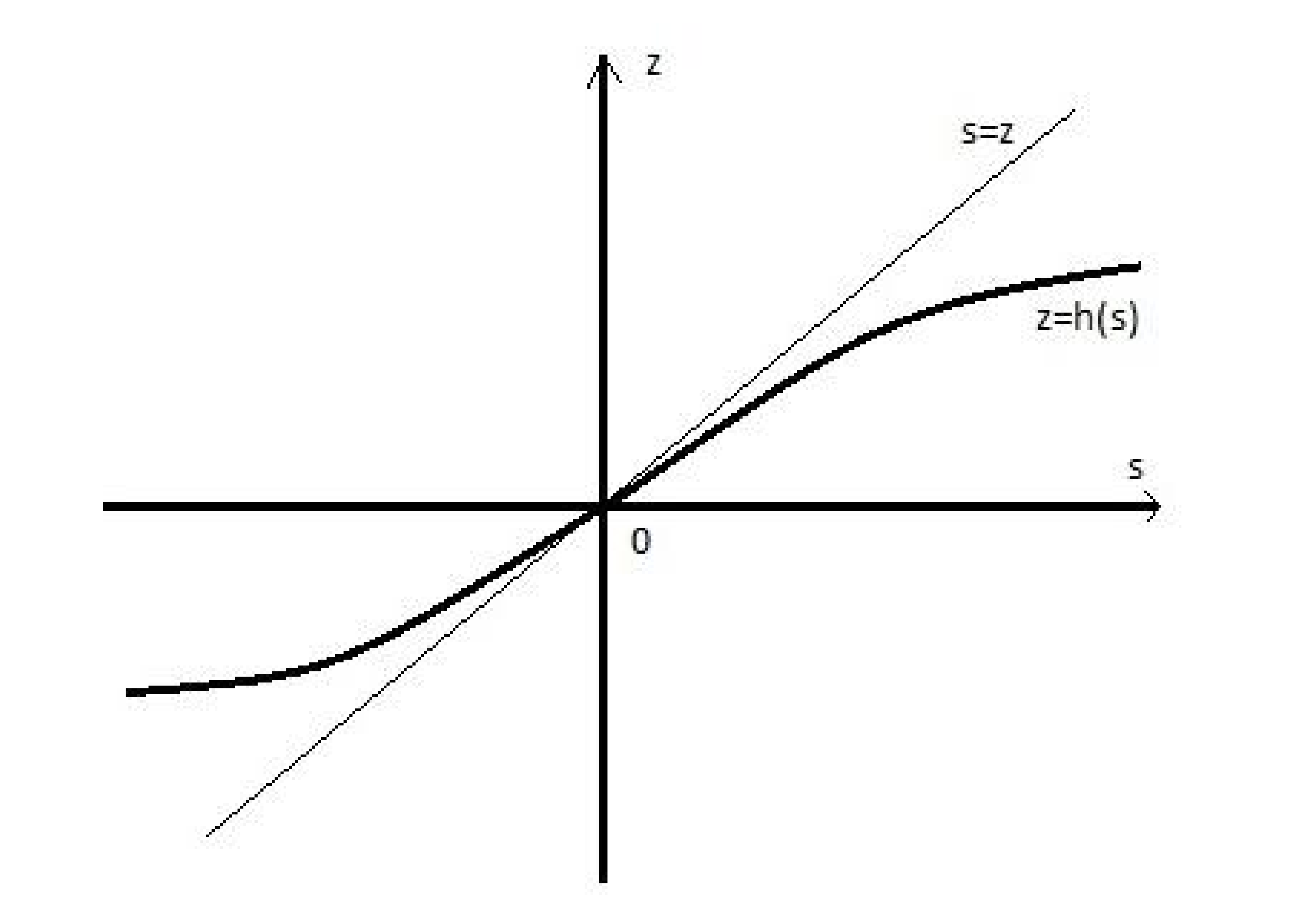}
			\caption{Plot of the function $h$}
		\end{figure}	
	\end{example}
\noi	After employing the change of variable  $w=h^{-1}(u),$ we define the new functional $J:W^{1,N}_0(\Om)\to \mb R$ as
	\begin{align}\label{energy}J(w)=\frac{1}{N}\displaystyle\int_{\Om} |\nabla w|^{N}dx-\frac 12\int_{\Om}\int_{\Om} \frac{F(y,h(w))F(x,h(w))}{|x-y|^\mu}dxdy.\end{align}
	From the properties of $f,h$, it can be derived that the functional $J$ is well defined  and $J\in C^1.$
	We observe that if  $w \in W^{1,N}_0(\Om)$ is a critical point of the functional $J$, that is for every $v \in W^{1,N}_0(\Om)$,
	\begin{align*}
	\int_{\Om} |\nabla w|^{N-2}\nabla w\nabla v dx-\int_{\Om}\int_{\Om} \frac{F(y,h(w)(y))f(x,h(w)(x))}{|x-y|^\mu}h'(w)v(x)dxdy=0,\end{align*} then $w$ is a weak solution (solution, for short)  to the following problem:
	\begin{equation}
	\label{pp}\left\{
	\begin{array}{l}
	-\Delta_N w=
	\left(\displaystyle\int_{\mb R^2}\frac{F(y,h(w))}{|x-y|^{\mu}}~dy\right)f(x,h(w))h'(w) \; \text{in}\;
	\Om,\\
	w> 0~\mbox{in}~ \Om,\\
	w=0 ~\mbox{on}~ \partial\Om.
	\end{array}
	\right.
	\end{equation}
	It is easy to see that problem \eqref{pp} is equivalent to our problem \eqref{pq}, which takes $u = h(w)$ as its solutions. Thus, our main objective is now reduced to proving the existence of the solution of  \eqref{pp}.
	
	\section{Proof of the main theorem}
	\subsection{Mountain pass geometry}

	We begin this section with the study of mountain pass structure and Palais-Smale sequences corresponding to the energy functional  $J: W^{1,N}_0(\Om)\rightarrow \mb R$ associated to \eqref{pp}.
	From the assumptions, $(f_2)$-$(f_3)$, we obtain that for any $\e>0$, $r\geq 1$,  there exist $\tilde C(N,\e)$ and $C(N,\e)>0$ such that
	\begin{align} 
		&|f(x,s)| \le \e |s|^{N-1 }+ \tilde C(N,\e) |s|^{r-1} \exp\left((1+\e)|s|^{\frac{2N}{N-1}}\right),\;\; \text{for all}\; (x,s)\in \Om \times \mb R,\label{f}\\
	&|F(x,s)| \le \e |s|^N + C(N,\e) |s|^r \exp\left((1+\e)|s|^{\frac{2N}{N-1}}\right),\;\; \text{for all}\; (x,s)\in \Om \times \mb R.\label{k1}
	\end{align}
	Thus, for any $u \in W^{1,N}_0(\Om)$, in light of Sobolev embedding, we have $u \in L^q(\Om)$ for all $q \in [1,\infty)$, 
	which implies that
	\begin{equation}\label{KC-new1}
	F(x,u) \in L^{q}(\Om)\mbox{ for any }q\geq 1.
	\end{equation} Now by \eqref{g}, we have if $w\in W^{1,N}_0(\Om),$ then $h(w)\in W^{1,N}_0(\Om)$, which together with
	Proposition \ref{HLS} implies that
	\begin{align}\label{k2}
	\int_{\Om}\left(\int_{\Om}\frac{F(y,h(w))}{|x-y|^{\mu}}dy\right)F(x,h(w))~dx  \leq C(N,\mu){\|F(\cdot,h(w))\|^2_{L^{\frac{2N}{2N-\mu}}(\Om)}}.
	\end{align}
	\begin{lemma}\label{lem7.1}
		Assume that the conditions $(f_1)$-$(f_5)$ hold and let $h$ be defined as in \eqref{g}. Then  there exist $\rho_*>0$ and $R_*>0$ such that
		\begin{align*}
		J(w)\geq R_*>0 \quad \text{for any } w\in W^{1,N}_0(\Om) \text{ with }\ \|w\|=\rho_*.
		\end{align*}
	\end{lemma}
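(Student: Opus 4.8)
The plan is to show that the energy $J(w)$ is bounded below by a positive constant on a small sphere in $W^{1,N}_0(\Om)$ by estimating the nonlocal Choquard term from above. First I would use \eqref{k2} together with the Hardy--Littlewood--Sobolev inequality to reduce the estimate of the double integral to bounding $\|F(\cdot,h(w))\|_{L^{2N/(2N-\mu)}(\Om)}$. Writing $q := \frac{2N}{2N-\mu} > 1$, the task is to control $\int_\Om |F(x,h(w))|^q\,dx$. Using the growth estimate \eqref{k1} on $F$, with a fixed small $\e>0$, one gets pointwise
\[
|F(x,h(w))|^q \le C\Big( |h(w)|^{Nq} + |h(w)|^{rq}\exp\big(q(1+\e)|h(w)|^{\frac{2N}{N-1}}\big)\Big),
\]
and then I would exploit property $(h_6)$, $|h(s)| \le 2^{1/(2N)}|s|^{1/2}$, so that the exponential argument becomes $q(1+\e)\, 2^{1/(N-1)}\,|w|^{N/(N-1)}$; this is the crucial point where the "doubled" exponent $2N/(N-1)$ in the nonlinearity collapses, via $h$, to the ordinary Trudinger--Moser exponent $N/(N-1)$ in the variable $w$.

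Next I would apply Hölder's inequality to split the exponential term: with conjugate exponents, $\int_\Om |h(w)|^{rq}\exp(\cdots)\,dx \le \||h(w)|^{rq}\|_{L^{p_1}}\,\|\exp(\cdots)\|_{L^{p_2}}$. For $\|w\| = \rho_*$ small, I would normalize $v := w/\|w\|$ so that $\|v\| = 1$, and choose $\rho_*$ small enough that $q(1+\e)\,2^{1/(N-1)}\,\rho_*^{N/(N-1)}\, p_2 \le \alpha_N$; then the Trudinger--Moser inequality (Theorem \ref{TM-ineq}) guarantees $\int_\Om \exp\big(q(1+\e)2^{1/(N-1)} p_2 |w|^{N/(N-1)}\big)\,dx \le C$, uniformly over such $w$. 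The polynomial factor $\||h(w)|^{rq}\|_{L^{p_1}}$ is controlled by $\|w\|^{rq}$ via $(h_5)$ and the continuous Sobolev embedding $W^{1,N}_0(\Om) \hookrightarrow L^{rqp_1}(\Om)$; likewise the first term gives $\int_\Om |h(w)|^{Nq} \le C\|w\|^{Nq}$. Combining, $\|F(\cdot,h(w))\|_{L^q(\Om)}^2 \le C\big(\|w\|^{2N} + \|w\|^{2r}\big)$ for $\|w\| \le \rho_*$ (where I take $r \ge 1$ fixed, and $2r > N$ after choosing $r$ suitably, or simply keep both powers).

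Putting this into \eqref{energy},
\[
J(w) \ge \frac{1}{N}\|w\|^N - C\big(\|w\|^{2N} + \|w\|^{2r}\big),
\]
and since $2N > N$ and (choosing $r$ with $2r>N$, which is allowed as $r\ge 1$ is at our disposal) $2r > N$, the right-hand side is strictly positive for $\|w\| = \rho_*$ with $\rho_*>0$ chosen sufficiently small; call this value $R_* > 0$. The main obstacle is the bookkeeping in the Hölder/Trudinger--Moser step: one must choose $\e$, the Hölder exponents $p_1,p_2$, the auxiliary polynomial exponent $r$, and the radius $\rho_*$ in a compatible way so that the exponential integral stays below the critical threshold $\alpha_N$ \emph{uniformly} on the sphere $\|w\|=\rho_*$; everything else is routine once $(h_5)$ and $(h_6)$ are invoked to pass from $h(w)$ to $w$.
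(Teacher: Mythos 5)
Your proposal is correct and follows essentially the same route as the paper's proof: reduce via \eqref{k2} and Hardy--Littlewood--Sobolev to bounding $\|F(\cdot,h(w))\|_{L^{2N/(2N-\mu)}(\Om)}$, use \eqref{k1} with $(h_5)$--$(h_6)$ to collapse the exponent $2N/(N-1)$ in $h(w)$ to $N/(N-1)$ in $w$, split off the exponential by H\"older, control it uniformly on a small sphere by Trudinger--Moser, and conclude from $J(w)\ge \frac1N\|w\|^N - C(\|w\|^{2N}+\|w\|^{2r})$ with $2r>N$. The only differences are in the bookkeeping of the H\"older exponents, which is immaterial.
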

	\begin{proof} Let $w\in W^{1,N}_0(\Om$.	From  \eqref{k2}, \eqref{k1}, H\"{o}lder inequality, Sobolev inequality and Lemma \ref{L1}-$(h_5),(h_6)$, we have
			\begin{align}\label{kc-MP1}
			&\int_{\Om}\left(\int_{\Om} \frac{F(y,h(w))}{|x-y|^{\mu}}dy\right)F(x,h(w))~dx \notag\\
			& \leq C(N,\mu)\|\e |h(w)|^N + C(\e) |h(w)|^r \exp\left((1+\e)|h(w)|^{\frac{2N}{N-1}}\right)\|_{\frac{2N}{2N-\mu}}^2\notag\\
			&\leq C(\mu)\left[2^{\frac{2N}{2N-\mu}}\left\{\e^{\frac{2N}{2N-\mu}} \int_{\Om} |h(w)|^{\frac{2N^2}{2N-\mu}} dx\right.\right.\notag\\ & + \left.\left. (C(\e))^{\frac{2N}{2N-\mu}} \int_{\Om} |h(w)|^{\frac{2Nr}{2N-\mu}}\exp\left(\frac{2N}{2N-\mu}(1+\e)|h(w)|^{\frac{2N}{N-1}}\right)  dx\right\} \right]^{\frac{2N-\mu}{N}} \notag\\
	&\leq C(N,\mu)\left[2^{\frac{2N}{2N-\mu}}\left\{\e^{\frac{2N}{2N-\mu}} \int_{\Om} |w|^{\frac{2N^2}{2N-\mu}} dx \right. \right. \notag\\ & \quad + \left.\left. (C(\e))^{\frac{2N}{2N-\mu}} \int_{\Om}2^{\frac{r}{2N-\mu}} |w|^{\frac{Nr}{2N-\mu}}\exp\left(\frac{2N}{2N-\mu}(1+\e) 2^{\frac{1}{N-1}}|w|^{\frac{N}{N-1}}\right)  dx\right\} \right]^{\frac{2N-\mu}{N}} \notag\\
	&\leq C(N,\mu)4 \left[{\e^{2} }  \|w\|^{2N}_{L^{\frac{2N^2}{2N-\mu}}(\Om)} + (C(\e))^2 2^{\frac rN} \|w\|^{2r}_{L^{\frac{2Nr }{2N-\mu}}(\Om)} \left\{\int_{\Om}\exp\left(\frac{4N(1+\e)}{2N-\mu}2^{\frac{1}{N-1}}\left({|w|}\right)^{\frac{N}{N-1}}\right)dx\right\}^{\frac{2N-\mu}{2N}} \right]\notag\\
			&\leq C_1(N,\mu,\e) \left[ \|w\|^{2N} + \|w\|^{2r} \left\{\int_{\Om}\exp\left(\frac{4N(1+\e)}{2N-\mu}2^{\frac{1}{N-1}}\left({|w|}\right)^{\frac{N}{N-1}}\right)dx\right\}^{\frac{2N-\mu}{2N}} \right]\notag\\
			&\leq C_1(N,\mu,\e) \left[ \|w\|^{2N} + \|w\|^{2r} \left\{\int_{\Om}\exp\left(\frac{4N(1+\e)}{2N-\mu}2^{\frac{1}{N-1}}\|w\|^{\frac{N}{N-1}}\left(\frac{|w|}{\|w\|}\right)^{\frac{N}{N-1}}\right)dx\right\}^{\frac{2N-\mu}{2N}} \right]
			\end{align}
		\noi	So, for sufficiently small $\e>0$ if we choose $w$ such that $\|w\|$ is small enough so that $$\displaystyle\frac{4N2^{\frac{1}{N-1}}(1+\e)\|w\|^{\frac{N}{N-1}}}{2N-\mu} <\al_N,$$ then using Theorem \ref{TM-ineq}  in \eqref{kc-MP1}, we obtain
		\begin{align}\label{1}
		\int_{\Om}\left(\int_{\Om} \frac{F(y,h(w))}{|x-y|^{\mu}}dy\right)F(x,h(w))~dx 
		\leq C_1(N,\mu,\e) \left( \|w\|^{2N}  +  \|w\|^{2r} \right).
		\end{align}
		Using \eqref{energy}, \eqref{1}, we have
		\begin{align*}
		J(w) &\geq \frac1N \|w\|^{N}-   C_1(N,\mu,\e) \left( \|w\|^{2 N}  +  \|w\|^{2r}
		\right).
		\end{align*}
		Now by taking  $r>0$ such that  $2r>N$, we can choose $0<\rho_*<1$  sufficiently small so that, we finally obtain $J(w) \geq R_* >0$ for all $w\in W^{1,N}_0(\Om)$ with $\|w\|=\rho_*$ and for some $R_*>0$ depending on $\rho_*$.\end{proof}
	
	\begin{lemma}\label{lem7.1.2}
		Assume that the conditions $(f_1)$-$(f_5)$ hold and let $h$ be defined as in \eqref{g}. Then  there exists $v_* \in W^{1,N}_0(\Om) $ with $\|v_*\|>\rho_*$ such that $ J(v_*)<0$ , where $\rho_*$ is   given as in Lemma \ref{lem7.1}.
	\end{lemma}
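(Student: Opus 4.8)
The plan is to produce $v_*$ of the form $t\phi$ for a fixed nonnegative test function $\phi$ and a large parameter $t$, and to show that $J(t\phi) \to -\infty$ as $t \to +\infty$; this is the Ambrosetti--Rabinowitz type coercivity-failure argument, adapted to the change of variables $h$. Concretely, I would fix $\phi \in C_c^\infty(\Om)$ with $\phi \ge 0$, $\phi \not\equiv 0$, choose $\delta>0$ so that $A_\delta := \{x\in\Om:\phi(x)\ge\delta\}$ has positive Lebesgue measure, and then estimate the double integral in \eqref{energy} from below on $A_\delta\times A_\delta$.

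The first step is a uniform-in-$x$ lower bound for $F$. By $(f_1)$, $g$ is continuous and strictly positive on the compact set $\overline\Om\times[\tfrac12,1]$, hence $f$ has there a positive minimum, so $c_0:=\inf_{x\in\overline\Om}F(x,1)\ge \tfrac12\min_{\overline\Om\times[1/2,1]} f>0$. Integrating the differential inequality contained in $(f_5)$, namely $\frac{d}{ds}\log F(x,s)\ge \tau/s$ on $(0,\infty)$, gives
\[
F(x,s)\ \ge\ F(x,1)\,s^{\tau}\ \ge\ c_0\,s^{\tau}\qquad\text{for all }x\in\overline\Om,\ s\ge 1 .
\]
Since $h$ is increasing with $h(s)\to+\infty$ (by $(h_7)$), for $t$ large enough that $t\delta\ge 1$ we have, by $(h_8)$, $h(t\phi(x))\ge h(1)(t\phi(x))^{1/2}\ge h(1)(t\delta)^{1/2}\ge 1$ on $A_\delta$, and combining the two displays,
\[
F\bigl(x,h(t\phi(x))\bigr)\ \ge\ c_0\,h(1)^{\tau}\,(t\delta)^{\tau/2}\ =:\ c_1\,t^{\tau/2}\qquad\text{on }A_\delta .
\]
Dropping the nonnegative contribution of $\Om\setminus A_\delta$ then yields
\[
\int_{\Om}\int_{\Om}\frac{F(y,h(t\phi))F(x,h(t\phi))}{|x-y|^{\mu}}\,dx\,dy\ \ge\ c_1^2\,t^{\tau}\int_{A_\delta}\int_{A_\delta}\frac{dx\,dy}{|x-y|^{\mu}}\ =:\ c_2\,t^{\tau},
\]
with $c_2\in(0,\infty)$ finite because $|A_\delta|>0$ and $0<\mu<N$ (the kernel is locally integrable). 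Plugging this into \eqref{energy} gives
\[
J(t\phi)\ \le\ \frac{t^{N}}{N}\,\|\phi\|^{N}\ -\ \frac{c_2}{2}\,t^{\tau}.
\]
Because $\tau>N$, the right-hand side tends to $-\infty$; so I fix $t_*$ large enough that simultaneously $\|t_*\phi\|=t_*\|\phi\|>\rho_*$ and $J(t_*\phi)<0$, and set $v_*:=t_*\phi$.

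The computation is essentially routine; the two points that need a little attention are (i) the uniform lower bound $F(x,1)\ge c_0>0$, which is exactly where the positivity part of $(f_1)$ together with continuity of $g$ enters, and (ii) tracking the power of $t$: the square-root growth of $h$ in $(h_7)$--$(h_8)$ turns the $s^{\tau}$ lower bound for $F$ into a $t^{\tau/2}$ lower bound, so the product $F\cdot F$ still produces $t^{\tau}$ with $\tau>N$ beating the kinetic term $t^{N}$. This is the precise mechanism by which the quasilinear ($u^2$-weighted) structure remains compatible with the superlinearity required for mountain-pass geometry; no real obstacle arises beyond bookkeeping.
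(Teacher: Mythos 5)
Your argument is correct and follows essentially the same route as the paper: use the Ambrosetti--Rabinowitz condition $(f_5)$ to get $F(x,s)\gtrsim s^{\tau}$, convert this via the square-root growth of $h$ in $(h_8)$ into a $t^{\tau}$ lower bound for the Choquard term along the ray $t\phi$, and let it beat the $t^{N}$ kinetic term since $\tau>N$. The only difference is cosmetic: the paper works on all of $\Om$ with $F(x,s)\ge C_1 s^{\tau}-C_2$ and expands the resulting product, whereas you restrict to $\{\phi\ge\delta\}$ and discard the rest by nonnegativity, which if anything handles the region where $t\phi$ is small more cleanly.
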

	\begin{proof}
		The condition $(f_5)$ implies that there exist some positive constant $C_1, C_2>0$ such that
		\begin{equation}\label{new2}F(x,s) \geq C_1s^{\tau}-C_2\;\text{ for all}\; (x,s) \in\Om \times [0,\infty).
		\end{equation}
		Let $\phi (\geq0) \in C_c^\infty(\Om)$ such that $\|\phi\|=1$. 
		Now by Lemma \ref{L1}-$(h_6)$,$(h_8)$ and \eqref{new2}, for large $t>1$, we obtain
		\begin{align*}
		&\int_{\Om} \left(\int_{\Om} \frac{F(y, h(t\phi
			))}{|x-y|^{\mu}}dy\right)F(x, h(t \phi))~dx\\ &\geq \int_{\Om} \int_{\Om} \frac{(C_1 (h(t \phi(y)))^{\tau}-C_2)(C_1 (h(t \phi(x)))^{\tau}-C_2)}{|x-y|^{\mu}}~dxdy\\
		& = C_1^2  \int_{\Om} \int_{\Om} \frac{(h(t\phi(y)))^{\tau} (h(t\phi(x)))^{\tau}}{|x-y|^\mu}~dxdy \\
		& \quad -2C_1C_2\int_{\Om} \int_{\Om}\frac{(h(t\phi(x)))^{\tau}}{|x-y|^\mu}~dxdy + C_2^2 \int_{\Om} \int_{\Om} \frac{1}{|x-y|^{\mu}}~dxdy\\
		&\geq C_1^2 (h(1))^{2\tau} t^\tau \int_{\Om} \int_{\Om} \frac{(\phi(y))^{\frac \tau 2}(\phi(x))^{\frac \tau 2}}{|x-y|^{\mu}}~dxdy\\
		& \quad -2C_1C_2 2^{\frac{\tau}{2N}} t^{\frac \tau 2}\int_{\Om}  \int_{\Om}\frac{(\phi(x))^{\frac \tau 2}}{|x-y|^\mu}~dxdy + C_2^2 \int_{\Om} \int_{\Om} \frac{1}{|x-y|^{\mu}}~dxdy.
		\end{align*}
		From the last relation and \eqref{energy} , we obtain
		\begin{align}\label{j}
		J(t \phi) & \leq \|t \phi\|^{N}- \frac{1}{2}\int_{\Om}\left( \int_{\Om}\frac{F(y, h(t \phi))}{|x-y|^{\mu}} dy\right)F(x, h( t \phi))~dx\notag\\
		&\leq C_3 {t^N} -C_4 {t^\tau}
		+C_5 {t^{\frac \tau 2}} -{C_6},
		\end{align}
		where $C_i's$ are positive constants for $i=3,4,5,6$.
		From \eqref{j}, we infer that   $J(t\phi) \to -\infty$ as $t \to \infty$, since $\tau>N$. Thus, there exists  $t_0(>0)\in\mathbb R$ so that $v_*(:=t_0 \phi)\in W^{1,N}_0(\Om)$ with $\|v_*\|> \rho_*$ such that $J(v_*)<0$.
	\end{proof}
	

	\noi From the above two lemmas, we get that $J$ satisfies  the mountain pass geometry near $0$.  Let $\ds \Gamma=\{\gamma\in C([0,1],W_0^{1,N}(\Om)):\gamma(0)=0,J(\gamma(1))<0\}$ and define the mountain pass critical level
	\begin{align}\label{level}\ds \beta^*=\inf_{\gamma\in \Gamma}\max_{t\in[0,1]}J(\gamma(t)).\end{align} Then by Lemma \ref{lem7.1}, Lemma \ref{lem7.1.2} and the mountain pass theorem we know that there exists a Palais-Smale sequence $\{w_k\}\subset W_0^{1,N}(\Om)$ for $J$ at level $\beta^*$, that is, as $k\to\infty$
	\[J(w_k) \to \beta^*; \; \text{and}\; J^\prime(w_k) \to 0 \text{\;\; in \;} \left(W_0^{1,N}(\Om)\right)^*,\] where $\left(W_0^{1,N}(\Om)\right)^*$ denotes the  dual of $W_0^{1,N}(\Om)$. Moreover, Lemma \ref{lem7.1} guarantees that $\beta^*>0.$
	\subsection{Analysis of Palais-Smale sequence}
	\begin{lemma}\label{lem712}
		Let $(f_1)$ and $(f_5)$	hold. Let  $h$ be given as in \eqref{g}. Then every Palais-Smale sequence of $J$ is bounded in $W_0^{1,N}(\Om)$.
	\end{lemma}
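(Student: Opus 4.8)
The statement to prove is that every Palais–Smale sequence $\{w_k\}$ for $J$ at any level is bounded in $W^{1,N}_0(\Om)$. The standard approach is to test the derivative $J'(w_k)$ against $w_k$ itself and combine this with the energy bound $J(w_k) \to \beta^*$ (so in particular $J(w_k) \le C$ for all $k$). Explicitly, for a Palais–Smale sequence we have $J(w_k) \le C$ and $|\langle J'(w_k), w_k\rangle| \le \varepsilon_k \|w_k\|$ with $\varepsilon_k \to 0$, hence $|\langle J'(w_k), w_k\rangle| \le \|w_k\| + C$ for $k$ large. The idea is to form the combination $J(w_k) - \frac{1}{\tau}\langle J'(w_k), w_k\rangle$ and show the nonlocal terms largely cancel, leaving a coercive leading term in $\|w_k\|^N$.

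Carrying this out: from \eqref{energy} we have $J(w_k) = \frac1N\|w_k\|^N - \frac12 \iint \frac{F(y,h(w_k))F(x,h(w_k))}{|x-y|^\mu}\,dx\,dy$, while
\[
\langle J'(w_k), w_k\rangle = \|w_k\|^N - \int_\Om\left(\int_\Om \frac{F(y,h(w_k))}{|x-y|^\mu}\,dy\right) f(x,h(w_k))\, h'(w_k)\, w_k\, dx.
\]
Therefore
\[
J(w_k) - \frac{1}{\tau}\langle J'(w_k), w_k\rangle = \left(\frac1N - \frac1\tau\right)\|w_k\|^N + \frac{1}{\tau}\int_\Om\left(\int_\Om \frac{F(y,h(w_k))}{|x-y|^\mu}\,dy\right)\big[ f(x,h(w_k)) h'(w_k) w_k - \tfrac{\tau}{2} F(x,h(w_k))\big]\, dx.
\]
Now I use the key observations: by $(h_4)$, $h'(w_k) w_k \ge \frac12 h(w_k)$ on $\{w_k > 0\}$ (and both sides vanish where $w_k \le 0$ since $F(\cdot, h(w_k)) = 0$ there by $(f_1)$ and $h(0)=0$), and by $(f_5)$, $f(x,h(w_k)) h(w_k) \ge \tau F(x,h(w_k))$ pointwise. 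Combining, $f(x,h(w_k)) h'(w_k) w_k \ge \frac12 f(x,h(w_k)) h(w_k) \ge \frac{\tau}{2} F(x,h(w_k))$, so the bracketed expression is nonnegative, and since $F \ge 0$ the entire double-integral term is $\ge 0$. Hence
\[
C + \tfrac{1}{\tau}\big(\|w_k\| + C\big) \ \ge\ J(w_k) - \tfrac{1}{\tau}\langle J'(w_k), w_k\rangle \ \ge\ \left(\tfrac1N - \tfrac1\tau\right)\|w_k\|^N,
\]
and since $\tau > N$ the coefficient $\frac1N - \frac1\tau > 0$, so $\|w_k\|^N \le C_1\|w_k\| + C_2$, which forces $\{\|w_k\|\}$ to be bounded.

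**Main obstacle.** The argument is essentially routine; the only point requiring care is the pointwise chain of inequalities that makes the bracket $f(x,h(w_k)) h'(w_k) w_k - \frac{\tau}{2}F(x,h(w_k))$ nonnegative — specifically handling the sign on $\{w_k \le 0\}$ (where everything vanishes because of $(f_1)$) and correctly chaining $(h_4)$ with $(f_5)$ on $\{w_k > 0\}$. One should also note that the nonlocal weight $\int_\Om F(y,h(w_k))|x-y|^{-\mu}\,dy$ is automatically nonnegative since $F \ge 0$, so multiplying the nonnegative bracket by it preserves the sign; this is why we never need any quantitative control on that convolution factor for the boundedness argument. I expect no genuine difficulty beyond bookkeeping.
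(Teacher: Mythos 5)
Your proof is correct, and it follows the same overall strategy as the paper (bound the combination ``energy minus a multiple of the derivative paired with a test function'' from below by $\bigl(\tfrac1N-\tfrac1\tau\bigr)\|w_k\|^N$ using $(f_5)$ and the positivity of $F$), but with a different choice of test function. You pair $J'(w_k)$ with $w_k$ itself and then use $(h_4)$ pointwise inside the nonlocal integral, via $f(x,h(w_k))\,h'(w_k)w_k \ge \tfrac12 f(x,h(w_k))h(w_k) \ge \tfrac{\tau}{2}F(x,h(w_k))$ on $\{w_k>0\}$ (everything vanishing on $\{w_k\le 0\}$ by $(f_1)$), which makes the combination $J(w_k)-\tfrac1\tau\langle J'(w_k),w_k\rangle$ work. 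The paper instead tests with the auxiliary function $v_k=h(w_k)/h'(w_k)$, for which $h'(w_k)v_k=h(w_k)$ exactly, so $(f_5)$ applies directly to the nonlocal term; the price is that one must first verify $v_k\in W^{1,N}_0(\Om)$ with $\|v_k\|\le 2\|w_k\|$, and the gradient term picks up the factor $1+\tfrac{2^{N-1}|h(w_k)|^N}{1+2^{N-1}|h(w_k)|^N}\le 2$, which is absorbed by using the coefficient $\tfrac{1}{2\tau}$ instead of $\tfrac1\tau$. Your variant buys a slightly lighter argument (no auxiliary test function, no norm comparison for $v_k$), at the cost of invoking $(h_4)$ inside the integrand; both yield the same coercive constant $\tfrac1N-\tfrac1\tau>0$ and the same conclusion.
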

	\begin{proof}
		Let $\{w_k\} \subset W_0^{1,N}(\Om)$ be a Palais-Smale sequence of $J$ at level $c\in\mb R$, that is, as $ k \to \infty$
		\begin{align*}
		J(w_k) \to c,  \; J^{\prime}(w_k) \to 0 \text{\; in } ( W_0^{1,N}(\Om))^*.
		\end{align*}
		 Then we have
		\begin{align}\label{kc-PS-bdd1}
		&\frac 1N\|w_k\|^N - \frac12 \int_{\Om} \left(\int_{\Om} \frac{F(y,h(w_k)}{|x-y|^{\mu}}dy \right)F(x,h (w_k))~dx \to c \; \text{as}\; k \to \infty,\notag\\
		&\left| \int_{\Om}{|\nabla w_k|^{N-2}\nabla w_k \nabla\phi}{dx}
		 -\int_\Om \left(\int_\Om \frac{F(y,h(w_k))}{|x-y|^{\mu}}dy \right)f(x,h(w_k)) h'(w_k)\phi ~dx \right|\leq \e_k\|\phi\|,
		\end{align}
		where $\e_k \to 0$ as $k\to \infty$. In the last relation, taking $\phi=w_k$ we get
		\begin{equation}\label{kc-PS-bdd2}
		\left| \|w_k\|^{N}-\int_\Om \left(\int_\Om \frac{F(y,h(w_k))}{|x-y|^{\mu}}dy \right)f(x,h(w_k) h'(w_k) w_k ~dx \right|\leq \e_k\|w_k\|.
		\end{equation}
		Now set $$v_k:=\frac{h(w_k)}{h'(w_k)}.$$  Since by Lemma \ref{L1}-$(h_4)$, we have $|v_k|\leq 2 |w_k|$ and by using \eqref{g} and Lemma \ref{L1}-$(h_5)$, we get 
		$$|\nabla v_k|= \left(1+\frac{2^{N-1}|h(w_k)|^N}{1+2^{N-1}|h(w_k)|^N}\right)|\nabla w_k|^N\leq 2|\nabla w_k|, $$	 combining these, we obtain
		$$\|v_k\|\leq2\|w_k\|.$$ Therefore, $v_k\in W^{1,N}_0(\Om)$. Then by  choosing $\phi=v_k$ and inserting this into \eqref{kc-PS-bdd1}, we deduce
		\begin{align}\label{1.0}
	\left|	\langle J'(w_k), v_k\rangle\right|&=\bigg|\int_\Om\left(1+\frac{2^{N-1}|h(w_k)|^N}{1+2^{N-1}|h(w_k)|^N}\right)|\nabla w_k|^N dx-\int_\Om \left(\int_\Om \frac{F(y,h(w_k))}{|x-y|^{\mu}}dy \right)f(x,h(w_k)) h(w_k) dx \bigg|\notag\\
		&\leq\e_k\|v_k\|\leq 2\e_k\|w_k\|,
		\end{align}	where $\langle\cdot,\cdot\rangle$ denotes the dual pairing of between $W_0^{1,N}(\Om)$ and its dual $\left(W_0^{1,N}(\Om)\right)^*.$
		Using  \eqref{1.0} and $(f_5)$,    we get
		\begin{align*}
		C+\e_k\|w_k\|&\geq J(w_k)-\frac{1}{2\tau}\langle J'(w_k), v_k\rangle\notag\\
		&\geq \int_\Om\left[\frac1N-\frac{1}{2 \tau}\left(1+\frac{2^{N-1}|h(w_k)|^N}{1+2^{N-1}|h(w_k)|^N}\right)\right]|\nabla w_k|^N dx\notag\\&\;\;\;\;\;\;-\frac12\int_\Om \left(\int_\Om \frac{F(y,h(w_k))}{|x-y|^{\mu}}dy \right)\left[F(x,h(w_k))-\frac1\tau f(x,h(w_k)) h(w_k)\right] dx\notag\\
		&\geq \int_\Om\left(\frac1N-\frac{1}{\tau}\right)|\nabla w_k|^N dx=\left(\frac1N-\frac{1}{\tau}\right)\|w_k\|^N.\notag
		\end{align*}
Hence, $\{w_k\}$ must be bounded in $W^{1,N}_0(\Om)$.
	\end{proof}
	\noi Next, we have the following lemma:
\begin{lemma}\label{PS-level}
		If $f$ satisfies $(f_1)$-$(f_6)$ and  $h$ is defined as in \eqref{g}, then 	{ $$0< \beta^* < \displaystyle \frac{1}{2N }\left( \frac{2N-\mu}{2N}\alpha_N\right)^{N-1},$$  } where $\beta^*$ is given as in \eqref{level}.
	\end{lemma}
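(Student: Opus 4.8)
Since $\beta^*>0$ has already been observed just after \eqref{level} (any $\gamma\in\Gamma$ meets the sphere $\|w\|=\rho_*$, on which $J\ge R_*>0$ by Lemma~\ref{lem7.1}), the plan is to establish only the strict upper bound $\beta^*<c_*$, where $c_*:=\tfrac1{2N}\big(\tfrac{2N-\mu}{2N}\alpha_N\big)^{N-1}$, by the classical concentration argument adapted to the change of variables $h$. I would fix a ball $B_R(x_0)\subset\subset\Om$ and use the Moser functions $\widetilde w_n\in W^{1,N}_0(\Om)$ supported in $B_R(x_0)$, normalised so that $\|\widetilde w_n\|=1$, equal to the constant $\omega_{N-1}^{-1/N}(\log n)^{(N-1)/N}$ on $B_{R/n}(x_0)$ and decaying logarithmically on the annulus. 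Using $(f_5)$ (equivalently \eqref{new2}) and Lemma~\ref{L1}$(h_8)$, one argues as in Lemma~\ref{lem7.1.2} that $J(t\widetilde w_n)\to-\infty$ as $t\to\infty$, so $t\mapsto J(t\widetilde w_n)$ attains its maximum at some $t_n>0$; reparametrising the ray $t\mapsto t\widetilde w_n$ yields an admissible path and hence $\beta^*\le J(t_n\widetilde w_n)$ for every $n$. It then suffices to show $J(t_n\widetilde w_n)<c_*$ for some $n$, and I would argue by contradiction, assuming $J(t_n\widetilde w_n)\ge c_*$ for all $n$.

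The quantitative core would run as follows. Writing $J(t_n\widetilde w_n)=\tfrac1N t_n^N-\tfrac12\int_\Om\int_\Om\frac{F(y,h(t_n\widetilde w_n))F(x,h(t_n\widetilde w_n))}{|x-y|^{\mu}}\,dx\,dy$ and discarding the (nonnegative) double integral gives $t_n^N\ge Nc_*$. Differentiating $t\mapsto J(t\widetilde w_n)$ at $t_n$ gives the identity
\[t_n^N=\int_\Om\Big(\int_\Om\frac{F(y,h(t_n\widetilde w_n))}{|x-y|^{\mu}}\,dy\Big)f(x,h(t_n\widetilde w_n))\,h'(t_n\widetilde w_n)\,t_n\widetilde w_n\,dx,\]
and I would bound it from below by restricting both integrals to $B_{R/n}(x_0)$ — where $t_n\widetilde w_n\equiv c_n:=\omega_{N-1}^{-1/N}t_n(\log n)^{(N-1)/N}$ — using $h'(c_n)c_n\ge\tfrac12 h(c_n)$ (Lemma~\ref{L1}$(h_4)$), symmetrising in $x\leftrightarrow y$, invoking $(f_6)$ in the form $s\,f(x,s)F(x,s)\ge M\exp(2|s|^{\frac{2N}{N-1}})$ (valid for any prescribed $M>0$ once $s$ is large, by \eqref{h-growth}), and using $\int_{B_{R/n}}\int_{B_{R/n}}|x-y|^{-\mu}\,dx\,dy=c_\mu(R/n)^{2N-\mu}$. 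This produces
\[t_n^N\ \ge\ \tfrac12\,M c_\mu R^{2N-\mu}\,n^{-(2N-\mu)}\,\exp\!\big(2\,h(c_n)^{\frac{2N}{N-1}}\big).\]
Plugging in $h(c_n)^{\frac{2N}{N-1}}=2^{\frac1{N-1}}c_n^{\frac N{N-1}}(1+o(1))$ from Lemma~\ref{L1}$(h_6)$--$(h_7)$, and using $c_n^{\frac N{N-1}}=\omega_{N-1}^{-\frac1{N-1}}t_n^{\frac N{N-1}}\log n$, $\alpha_N=N\omega_{N-1}^{\frac1{N-1}}$, $Nc_*=\tfrac12\big(\tfrac{2N-\mu}{2N}\alpha_N\big)^{N-1}$, a short computation turns this into
\[t_n^N\ \ge\ \tfrac12\,M c_\mu R^{2N-\mu}\; n^{\,(2N-\mu)\big[(t_n^N/Nc_*)^{1/(N-1)}(1+o(1))-1\big]}.\]

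To finish, I would first show $\{t_n\}$ is bounded, using $(f_5)$ and \eqref{new2} (which, as in \eqref{j}, make $J(t\widetilde w_n)\le C_3t^N-C_4t^\tau+C_5t^{\tau/2}$ with $\tau>N$ for large $t$, uniformly in $n$). Since $t_n^N\ge Nc_*$, the exponent of $n$ above is $\ge -o(1)$; if $\limsup_n t_n^N>Nc_*$ it would eventually be bounded below by a positive constant, forcing $t_n^N\to\infty$ and contradicting boundedness, so $t_n^N\to Nc_*$. Reinserting this, the exponent tends to $0$, so $t_n^N\ge\tfrac12 Mc_\mu R^{2N-\mu}\,n^{-o(1)}$; since $c_n\to\infty$ one may let $M=M_n\to\infty$, which forces $t_n^N\to\infty$ and again contradicts $t_n^N\to Nc_*$. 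Hence $J(t_n\widetilde w_n)<c_*$ for large $n$ and $\beta^*<c_*$.

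The main obstacle is exactly the constant bookkeeping in the concentration estimate: forcing the exponent of $n$ to be $(2N-\mu)\big[(t_n^N/Nc_*)^{1/(N-1)}-1\big]+o(1)$ and controlling the error terms coming from the smooth map $h$. This is where the Trudinger--Moser constant $\alpha_N$, the Hardy--Littlewood--Sobolev exponent $\tfrac{2N}{2N-\mu}$, and — decisively — the \emph{halving} caused by $h(s)\sim 2^{1/(2N)}s^{1/2}$ (which converts the critical growth $\exp(2|s|^{\frac{2N}{N-1}})$ into an effective $\exp(2^{\frac N{N-1}}|w|^{\frac N{N-1}})$ growth in the variable $w$) combine to give precisely $c_*=\tfrac1{2N}\big(\tfrac{2N-\mu}{2N}\alpha_N\big)^{N-1}$; and it is also the point where the stronger hypothesis $(f_6)$, rather than merely $(f_2)$--$(f_3)$, becomes indispensable.
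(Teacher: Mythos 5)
Your proposal is correct and follows essentially the same route as the paper: Moser functions concentrated in a small ball, the contradiction hypothesis $J(t_n\widetilde w_n)\ge c_*$ giving $t_n^N\ge Nc_*$, the derivative identity at the maximum point bounded below on the concentration ball via Lemma \ref{L1}-$(h_4)$, $(f_6)$ and the estimate $\int\int|x-y|^{-\mu}\,dx\,dy\gtrsim (R/n)^{2N-\mu}$, the asymptotics $(h_7)$ supplying the factor $2^{1/(N-1)}$, and boundedness of $\{t_n\}$ (from the uniform mountain-pass decay coming from $(f_5)$) played off against the arbitrariness of the constant in $(f_6)$. The only cosmetic difference is your endgame, where you first pin down $t_n^N\to Nc_*$ before letting $M\to\infty$, whereas the paper concludes directly that $t_k^N\ge \tfrac b2 C_{\mu,N}\delta^{2N-\mu}$ uniformly in $k$ and contradicts boundedness of $t_k$ since $b$ is arbitrary; the bookkeeping of constants is the same in both.
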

	\begin{proof}
	{	By Lemma \ref{lem7.1.2}, for $w \in W_0^{1,N}(\Om)$ with $\|w\|=1,$ $J(tw)\to-\infty$ as $t\to+\infty$. Also,  from \eqref{level}, we get   $$\beta^* \leq \displaystyle\max_{t\in[0,1], w \in {W^{1,N}_0(\Omega)\setminus\{0\}}} J(tw)$$ with $J(w)<0.$ So, it is sufficient to prove that there exists some $v\in W^{1,N}_0(\Om)$ such that $\|v\|=1$ and
		\[\max_{t\in[0,\infty)} J(tv) < \frac{1}{2N} \left( \frac{2N-\mu}{2N}\alpha_N\right)^{N-1}.\]}
		To show this, let us consider the sequence of Moser functions $\{\mc M_k\}$ defined as
		\begin{equation*}
		\mc M_k(x)=\frac{1}{\omega_{N-1}^{\frac{1}{N}}}\left\{
		\begin{split}
		& (\log k)^{\frac{N-1}{N}},\; 0\leq |x|\leq \frac{\delta}{k},\\
		& \frac{\log \left(\frac{\delta}{|x|}\right)}{(\log k)^{\frac{1}{N}}}, \; \frac{\delta}{k}\leq |x|\leq \delta\\
		& 0,\; |x|\geq \delta.
		\end{split}
		\right.
		\end{equation*}
		Then it is easy to see that supp$(\mc M_k) \subset B_\delta(0)$ and $\|\mc M_k\| =1$ for all $k\in\mathbb N$. Now we assert that there exists some $k \in \mb N$ such that
		\begin{align}\label{key}\max_{t\in[0,\infty)} J(t\mc M_k) < \frac{1}{2N }\left( \frac{2N-\mu}{2N}\alpha_N\right)^{N-1}.\end{align}
		Indeed, if this doesn't hold  then for all $k \in \mb N,$ there exists  $t_k>0$ such that
		\begin{equation}\label{kc-PScond0}
		\begin{split}
		&\max_{t\in[0,\infty)} J(t\mc M_k) = J(t_k\mc M_k) \geq \frac{1}{2N} \left( \frac{2N-\mu}{2N}\alpha_N\right)^{N-1}.
		\end{split}
		\end{equation}
		Since    $(f_1)$ implies  that $F(x,h(t_k\mc M_k))\geq 0$ for all $k\in\mathbb N$, by  the definition of $J(t_kw_k)$ together with  \eqref{kc-PScond0}, we obtain
		\begin{equation}\label{kc-PScond2}
		t_k^N \geq \frac{1}{2} \left( \frac{2N-\mu}{2N}\alpha_N\right)^{N-1}.
		\end{equation}
	Also, in view of \eqref{kc-PScond0}, we have
		$\frac{d}{dt}(J(t\mc M_k))|_{t=t_k}=0.$
		This combining with Lemma \ref{L1}-$(h_4)$ yields that
		\begin{align}\label{kc-PS-cond3}
		t_k^N &= \int_\Om \left(\int_\Om \frac{F(y,h(t_k\mc M_k))}{|x-y|^{\mu}}dy\right)f(x,h(t_k\mc M_k))h'(t_k\mc M_k)t_k\mc M_k ~dx\notag\\
		&\geq\frac 12 \int_{B_{\delta/k}(0)}f(x,h(t_k\mc M_k))h(t_k\mc M_k)\left( \int_{B_{\delta/k}(0)}\frac{F(y,h(t_k\mc M_k))}{|x-y|^\mu}~dy\right)~dx.
		\end{align}
		Now from \eqref{h-growth}, we know that for each $b>0$ there exists a constant $R_b$ such that
		\[sf(x,s)F(x,s) \geq b \exp\left( 2|s|^{\frac{2N}{N-1}}\right),\; \text{whenever}\; s \geq R_b.\]
		From \eqref{kc-PScond2}, we infer that
		${t_k}\mc M_k \to \infty \;\text{as}\; k \to \infty$ in $B_{\de/k}(0).$ Now by Lemma \ref{L1}-$(h_6)$, we get $h({t_k}\mc M_k) \to \infty \;\text{as}\; k \to \infty$, uniformly   in $B_{\de/k}(0).$   So, we can choose $s_b \in \mb N$ such that  in $B_{\de/k}(0)$, 
		\[h(t_k\mc M_k) \geq R_b,\; \text{for all}\; k\geq s_b.\] Furthermore, by Lemma \ref{L1}-$(h_7)$,  for any  $\e>0$, there exists $k_0\in\mb N$ such that for all $k\geq k_0,$ 
		\begin{equation}\label{new1} |h({t_k}\mc M_k)|^{\frac{2N}{N-1}}\geq ({t_k}\mc M_k)^{\frac{N}{N-1}} (2^{\frac{1}{N-1}}-\e)\end{equation}
		In addition, using the same idea as in \cite{yang-JDE} (see  equation $(2.11)$), we can have
		\begin{equation} \label{new21}
			\int_{B_{\delta/k}(0)}\int_{B_{\delta/k}(0)} \frac{~dxdy}{|x-y|^\mu} \geq C_{\mu, N} \left(\frac{\delta}{k}\right)^{2N-\mu},
		\end{equation}
		where $C_{\mu, N}$ is a positive constant depending on $\mu$ and $N$.
		Using these estimates in \eqref{kc-PS-cond3}, \eqref{new1} and \eqref{new21} for sufficiently large $b$ and for $k\geq k_0$,
		we get 
	{	\begin{align}
		t_k^N&\geq \frac b2 \int_{B_{\delta/k}(0)}\int_{B_{\delta/k}(0)} \frac{~dxdy}{|x-y|^\mu}\exp \left( {2 (h(t_k\mc M_k))^{\frac{2N}{N-1}}}\right)\notag\\&\geq \frac b2 \int_{B_{\delta/k}(0)}\int_{B_{\delta/k}(0)} \frac{~dxdy}{|x-y|^\mu}\exp \left( {2 (2^{\frac{1}{N-1}}-\e)(t_k\mc M_k)^{\frac{N}{N-1}}}\right)\notag\\&=\frac b2 \exp \left( \log k\left(\frac{2 (2^{\frac{1}{N-1}}-\e)  t_k^{\frac{N}{N-1}}}{\omega_{N-1}^{\frac{1}{N-1}}}\right)\right)\int_{B_{\delta/k}(0)}\int_{B_{\delta/k}(0)}\frac{~dxdy}{|x-y|^\mu}\notag\\&\geq\frac b2 \exp \left( \log k\left(\frac{2 (2^{\frac{1}{N-1}}-\e) t_k^{\frac{N}{N-1}}}{\omega_{N-1}^{\frac{1}{N-1}}}\right)\right)C_{\mu,N}\left(\frac{\delta}{k}\right)^{2N-\mu}\notag\\&=\frac b2 \exp \left( \log k\left[\left(\frac{2 (2^{\frac{1}{N-1}}-\e) t_k^{\frac{N}{N-1}}}{\omega_{N-1}^{\frac{1}{N-1}}}\right)-(2N-\mu)\right]\right)C_{\mu,N}{\delta}^{2N-\mu}\notag.
		\end{align} Now taking $\e\to0$ in the above expression and using  \eqref{kc-PScond2}, we obtain
	\begin{align}\label{bd}
	t_k^N\geq \frac b2  C_{\mu, N}\delta^{2N-\mu},
	\end{align} 
uniformly in $k\in\mb N.$
		On the other hand, from the proof of Lemma \ref{lem7.1.2}, it follows that $J(t \mc M_k)\to-\infty$ as $t\to\infty$ uniformly in $k\in \mathbb N.$ Using this with  \eqref{kc-PScond0}, we get that the sequence  $\{t_k\}$ is bounded, which contradicts	\eqref{bd}, since $b$ is arbitrary.} This establishes our claim \eqref{key} and hence, we conclude the proof. \end{proof}
	\subsection{Convergence results}
	Let $\{w_k\}\subset W^{1,N}_0(\Om)$ be a Palais-Smale sequence for $J$.
	Now Lemma \ref{lem712} yields that $\{w_k\}$  is bounded in $W^{1,N}_0(\Om)$. Thus, there exists  $w\in W^{1,N}_0(\Om)$ such that up to a subsequence, still denoted by $\{w_k\}$, 
	$w_k \rightharpoonup w$ weakly in $ W^{1,N}_0(\Om),
	w_k \to w$  strongly in $ L^q(\Om),\, q \in [1,\infty),
	w_k(x) \to w(x)$ point-wise a.e. in $ \Om,$
	as $k \to \infty$. Also, from \eqref{kc-PS-bdd1} and \eqref{kc-PS-bdd2}, we obtain that for some positive constants $C', C''$, 
	\begin{align}
	\int_\Om \left(\int_\Om\frac{F(y,h(w_k))}{|x-y|^\mu}dy\right)F(x,h(w_k))~dx &\leq C',\label{wk-sol10}\\
	\int_\Om \left(\int_\Om\frac{F(y,h(w_k))}{|x-y|^\mu}dy\right)f(x,h(w_k))h'(w_k)w_k~dx & \leq C''.\label{wk-sol100}
	\end{align}
	Now we have the next two results, where we consider the Palais-Smale sequence $\{w_k\}$ to be satisfying all the above  facts.  
	\begin{lemma}\label{PS-ws} 	 Assume that the assumptions $(f_1)$-$(f_5)$ hold and let $h$ be defined as in \eqref{g}. Let $\{w_k\}\subset W^{1,N}_0(\Om)$ be a Palais-Smale sequence for $J$.
		Then we have	\begin{align}\label{3.23}
		\lim_{k \to \infty} \int_{\Omega} \left(\int_{\Omega}\frac{|F(y,h(w_k))-F(y,h(w))|}{|x-y|^{\mu}} dy \right) |F(x,h(w_k))-F(x,h(w))|  ~dx =0.
		\end{align}
	\end{lemma}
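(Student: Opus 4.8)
The plan is to use the Hardy--Littlewood--Sobolev inequality to reduce \eqref{3.23} to an $L^{p}$-convergence statement for $F(\cdot,h(w_k))$, and then to get that convergence from pointwise a.e.\ convergence together with a uniform-integrability estimate furnished by the Trudinger--Moser inequality.

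First I would apply Proposition \ref{HLS} with $t=r=\tfrac{2N}{2N-\mu}$ (note $\tfrac{2N}{2N-\mu}\in(1,2)$ since $0<\mu<N$) to the pair $g_1=g_2=|F(\cdot,h(w_k))-F(\cdot,h(w))|$; since $h$ maps $W_0^{1,N}(\Om)$ into itself, \eqref{KC-new1} guarantees $F(\cdot,h(w_k)),F(\cdot,h(w))\in L^{q}(\Om)$ for every $q\ge1$, so this is legitimate. This bounds the integral in \eqref{3.23} by $C(N,\mu)\,\| F(\cdot,h(w_k))-F(\cdot,h(w)) \|_{L^{2N/(2N-\mu)}(\Om)}^{2}$, so it suffices to prove
\[
F(\cdot,h(w_k))\longrightarrow F(\cdot,h(w))\qquad\text{strongly in }L^{\frac{2N}{2N-\mu}}(\Om).
\]

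Next I would note that $w_k\to w$ a.e.\ in $\Om$ together with the continuity of $F$ and $h$ gives $F(x,h(w_k(x)))\to F(x,h(w(x)))$ a.e., and that Lemma \ref{lem712} makes $\{w_k\}$ bounded in $W^{1,N}_0(\Om)$, hence (Sobolev embedding) in $L^q(\Om)$ for all $q\in[1,\infty)$. By Vitali's convergence theorem on the finite-measure space $\Om$, the desired $L^{2N/(2N-\mu)}$-convergence reduces to showing that the functions $|F(\cdot,h(w_k))|^{2N/(2N-\mu)}$ are uniformly integrable, and for this it is enough to bound $\{F(\cdot,h(w_k))\}$ in $L^{s}(\Om)$ for some $s>\tfrac{2N}{2N-\mu}$. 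To obtain such a bound I would invoke \eqref{k1} at the argument $h(w_k)$ and use $(h_5)$--$(h_6)$ (which replace $|h(w_k)|^{\frac{2N}{N-1}}$ by $\le 2^{1/(N-1)}|w_k|^{\frac{N}{N-1}}$) to get
\[
|F(x,h(w_k))|\le \e\,2^{1/2}|w_k|^{N/2}+C(\e)\,2^{r/(2N)}\,|w_k|^{r/2}\exp\!\Big((1+\e)\,2^{1/(N-1)}\,|w_k|^{\frac{N}{N-1}}\Big),
\]
then raise to the power $s$, integrate, peel off the polynomial prefactor by H\"older's inequality (harmless, since $\{w_k\}$ is bounded in every $L^q(\Om)$), and bound the exponential factor by writing $|w_k|^{\frac{N}{N-1}}=\|w_k\|^{\frac{N}{N-1}}(|w_k|/\|w_k\|)^{\frac{N}{N-1}}$ and applying Theorem \ref{TM-ineq}.

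The hard part is that the preceding estimate is legitimate only while the effective exponent --- roughly $s\,(1+\e)\,2^{1/(N-1)}\sup_k\|w_k\|^{\frac{N}{N-1}}$ times a H\"older conjugate $>1$ --- stays below $\alpha_N$, i.e.\ only while $\sup_k\|w_k\|^{N}<\tfrac12\big(\tfrac{2N-\mu}{2N}\alpha_N\big)^{N-1}$; this is precisely where the sharpness of the critical-level analysis is needed. For the mountain-pass Palais--Smale sequence one combines the boundedness estimate from Lemma \ref{lem712} with the bound $\beta^*<\tfrac{1}{2N}\big(\tfrac{2N-\mu}{2N}\alpha_N\big)^{N-1}$ of Lemma \ref{PS-level} to keep the norms below that threshold --- and it is exactly the quasilinear gain $|h(s)|\sim|s|^{1/2}$ in $(h_6)$, the feature that halves the critical growth, that leaves this room. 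Once the $L^{s}$-bound is secured, the functions $|F(\cdot,h(w_k))|^{2N/(2N-\mu)}$ are uniformly integrable, Vitali gives the strong $L^{2N/(2N-\mu)}$-convergence of $F(\cdot,h(w_k))$, and the HLS estimate from the first step yields \eqref{3.23}. (If one prefers not to refer to the energy level, the same $L^{s}$-bound can instead be obtained locally via a Lions-type concentration--compactness decomposition for $W^{1,N}_0(\Om)$, the local $W^{1,N}$-norms of $w_k$ being arbitrarily small away from the finitely many concentration atoms, which the level estimate then excludes.)
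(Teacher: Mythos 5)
Your reduction via Hardy--Littlewood--Sobolev to strong $L^{\frac{2N}{2N-\mu}}$-convergence of $F(\cdot,h(w_k))$ is fine in principle, but the step that is supposed to deliver uniform integrability contains a genuine gap. Your Trudinger--Moser estimate needs, after H\"older, an effective exponent below $\alpha_N$, i.e.\ essentially $\sup_k\|w_k\|^{N}<\tfrac12\bigl(\tfrac{2N-\mu}{2N}\alpha_N\bigr)^{N-1}$, and you claim this follows from Lemma \ref{lem712} together with Lemma \ref{PS-level}. It does not: Lemma \ref{lem712} only gives $\limsup_k\|w_k\|^{N}\le \tfrac{N\tau}{\tau-N}\,c$ for a Palais--Smale sequence at level $c$, so even at $c=\beta^*$ you only get the threshold multiplied by the factor $\tau/(\tau-N)>1$, which can be arbitrarily large when $\tau$ is close to $N$. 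The sharp bound $\lim_k\|w_k\|^{N}=N\beta^*$, which is what actually sits below the threshold, is derived in the paper only under the contradiction hypothesis $w\equiv0$, and its derivation uses precisely the convergence \eqref{FF} that is part of the present lemma --- so invoking it here is circular. Note also that the lemma is stated for an arbitrary Palais--Smale sequence (any level) under $(f_1)$--$(f_5)$ alone, so no argument resting on the mountain-pass level estimate can prove it as stated; your fallback remark about a concentration--compactness decomposition with ``the level estimate excluding atoms'' is likewise unjustified, since concentration is exactly what cannot be ruled out a priori at critical exponential growth.

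The paper's proof avoids Trudinger--Moser altogether at this point. It truncates at levels $\al$ and $\La$, and on the sets where $h(w_k)\ge\al$ (resp.\ $\ge\La$) it uses $(f_4)$, namely $F(x,s)\le M_0 f(x,s)s^{-m_0}$ for $s\ge s_0$, together with Lemma \ref{L1}-$(h_4)$ and the a priori bound \eqref{wk-sol100} coming from $\langle J'(w_k),v_k\rangle\to0$ with $v_k=h(w_k)/h'(w_k)$, to make those contributions uniformly of size $O(\al^{-(m_0+1)})$, $O(\La^{-(m_0+1)})$; on the truncated parts it uses pointwise convergence, the HLS bound with the $L^q$-convergence of $w_k$, and a generalized dominated convergence argument (Theorem 4.9 of \cite{bz}) to pass to the limit. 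That route works at every level and uses only $(f_1)$--$(f_5)$, which is why the hypothesis $(f_4)$ appears in the lemma; your proposal never uses $(f_4)$, which is a further sign that the uniform-integrability input has been sourced from an unavailable norm bound rather than from the structure of the nonlinearity.
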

	\begin{proof}
	 We know that if a function $\mc F\in L^1(\Om)$ then for any $\e>0$ there exists a $\delta(\e)>0$ such that
		$\left| \int_{\Om'} \mc F(x)~dx\right| <\e,$
		for any measurable set $\Om'\subset \Om$ with $|\Om'|\leq \delta(\e)$.
		Also, if $\mc F\in L^{1}(\Om)$ then for any fixed $\delta_*>0$ there exists $\al>0$ such that
		$$|\{x\in \Om : |\mc F(x)|\geq \al\}|\leq \delta_*.$$
		Now  \eqref{wk-sol10} gives that
		$$\left(\displaystyle\int_\Om\frac{F(y,h(w_k))}{|x-y|^\mu}dy\right)F(\cdot,h(w_k)) \in L^{1}(\Om)$$
		and similarly, \eqref{k2} gives that
		$$\left(\displaystyle\int_\Om\frac{F(y,h(w))}{|x-y|^\mu}dy\right)F(\cdot,h(w)) \in L^{1}(\Om).$$
		Thus we fix $\de_*>0$ and  choose $\al> \max\left\{1,\left(\frac{2C'' M_0}{{\de_*}}\right)^{\frac{1}{m_0+1}}, s_0\right\}$,  where $C''$ is given in \eqref{wk-sol100}, such that
		{\begin{align}\label{bb}
				&\int_{\Om \cap\{h(w)\geq \al\}} \left(\int_{\Omega} \frac{F(y,h(w))}{|x-y|^{\mu}} dy \right)F(x,h(w))  ~dx  \leq \de_*.
		\end{align}} Now by using $(f_4)$, Lemma \ref{L1}-$(h_4)$ and \eqref{wk-sol10}, we deduce
		\begin{align}\label{b}
		&\int_{\Om \cap\{h(w_k)\geq \al\}} \left(\int_{\Omega} \frac{F(y,h(w_k))}{|x-y|^{\mu}} dy \right)F(x,h(w_k))  ~dx \notag\\&\leq M_0 \int_{\Om \cap\{h(w_k)\geq \al\}} \left(\int_{\Omega} \frac{F(y,h(w_k))}{|x-y|^{\mu}}dy\right) \frac{f(x,h(w_k))}{(h(w_k))^{m_0}}  ~dx\notag\\
		&\leq M_0 \int_{\Om \cap\{h(w_k)\geq \al\}} \left(\int_{\Omega} \frac{F(y,h(w_k))}{|x-y|^{\mu}}dy\right) \frac{f(x,h(w_k))h(w_k)}{(h(w_k))^{m_0+1}}  ~dx\notag\\
		&\leq \frac{2M_0}{ \al^{m_0+1}}\int_{\Om \cap\{h(w_k)\geq \al\}} \left(\int_{\Omega} \frac{F(y,h(w_k))}{|x-y|^{\mu}} dy\right) {f(x,h(w_k)) h'(w_k)w_{k}} ~dx<\de_*.
		\end{align}
		
	Combining \eqref{b} and \eqref{bb}, we have
		\begin{align*}
		&\left| \int_{\Omega} \left(\int_{\Omega}\frac{F(y,h(w_k))}{|x-y|^{\mu}} dy \right) F(x,h(w_k)) ~dx-  \int_{\Omega} \left(\int_{\Omega}\frac{F(y,h(w) )}{|x-y|^{\mu}} dy \right) F(x,h(w)) ~dx\right|\\
		&\leq 2\de_*+ \bigg|\int_{\Om \cap \{h(w_k)\leq \al\}} \left(\int_{\Omega}\frac{F(y,h(w_k))}{|x-y|^{\mu}} dy \right) F(x,h(w_k)) ~dx\\&\qquad\qquad\qquad-  \int_{\Omega\cap \{h(w)\leq \al\}} \left(\int_{\Omega}\frac{F(y,h(w))}{|x-y|^{\mu}} dy \right) F(x,h(w)) ~dx\bigg|.
		\end{align*}
		Next, we show  that as $k \to \infty$
		\begin{align*}
		\int_{\Om \cap \{h(w_k)\leq \al\}} \left(\int_{\Omega}\frac{F(y,h(w_k))}{|x-y|^{\mu}} dy \right) F(x,h(w_k))~dx \to \int_{\Om \cap \{h(w)\leq \al\}} \left(\int_{\Omega}\frac{F(y,h(w))}{|x-y|^{\mu}} dy \right) F(x,h(w)) ~dx.
		\end{align*}
		Since $\left(\displaystyle\int_\Om\frac{F(y,h(w))}{|x-y|^\mu}dy\right)F(\cdot,h(w)) \in L^{1}(\Om)$,  Fubini's theorem yields that
		\begin{align*}
		&\lim_{\Lambda \to \infty} \int_{\Om \cap\{h(w)\leq \al\}}\left(\int_{\Om\cap\{h(w)\geq \Lambda\}}\frac{F(y,h(w))}{|x-y|^{\mu}}dy\right)F(x,h(w))~dx\\
		&= \lim_{\Lambda \to \infty} \int_{\Om \cap\{h(w)\geq \Lambda\}}\left(\int_{\Om \cap\{h(w)\leq \al\}}\frac{F(y,h(w))}{|x-y|^{\mu}}dy\right)F(x,h(w))~dx=0.
		\end{align*}
		Therefore, we can fix  $\Lambda> \max\left\{\left(\frac{2C'' M_0}{\de_*}\right)^{\frac{1}{m_0+1}}, s_0\right\}$, where $C''$ is given in \eqref{wk-sol100},  such that
		\[\int_{\Om \cap\{h(w)\leq \al\}} \left(\int_{\Om \cap\{h(w)\geq \Lambda\} }\frac{F(y,h(w) )}{|x-y|^{\mu}} dy \right) F(x,h(w)) ~dx \leq \delta_*.\] Next, using \eqref{wk-sol10}, $(f_4)$ and Lemma \ref{L1}-$(h_4)$, we deduce
		\begin{align*}
		&\int_{\Om \cap\{h(w_k)\leq \al\}} \left(\int_{\Om \cap\{h(w_k)\geq \Lambda\}}\frac{F(y,h(w_k))}{|x-y|^{\mu}} dy \right)F(x,h(w_k)) ~dx\\
		& \leq M_0\int_{\Om \cap \{h(w_k)\leq \al\}} \left(\int_{\Om \cap\{h(w_k)\geq \Lambda\}}\frac{ f(y,h(w_k))}{(h(w_k))^{m_0}|x-y|^{\mu}} dy \right)F(x,h(w_k)) ~dx\\
		&\leq \frac{M_0}{{\Lambda}^{m_{0}+1}} \int_{\Om \cap\{h(w_k)\leq \al\}} \left(\int_{\Om \cap\{h(w_k)\geq \Lambda\} }\frac{  f(y,h(w_k))h (w_k)}{|x-y|^{\mu}} dy \right) F(x,h(w_k)) ~dx\\
		&\leq \frac{2M_0}{\Lambda^{m_0+1}} \int_{\Om \cap\{h(w_k)\leq \al\}} \left(\int_{\Om \cap\{h(w_k)\geq \Lambda\} }\frac{  f(y,h(w_k))h'(w_k) w_k(y)}{|x-y|^{\mu}} dy \right) F(x,h(w_k)) ~dx\\
		&\leq \frac{2M_0}{\Lambda^{m_{0}+1}} \int_{\Om} \left(\int_{\Om }\frac{F(y,h(w_k))}{|x-y|^{\mu}} dy \right) f(x,h(w_k)) h'(w_k)w_k ~dx\leq \de_*.
		\end{align*}
		
		Thus, we obtain
		\begin{align*}
		&\left|\int_{\Om \cap\{h(w)\leq \al\}} \left(\int_{\Om \cap\{h(w)\geq \Lambda\} }\frac{F(y,h(w) )}{|x-y|^{\mu}} dy \right) F(x,h(w)) ~dx\right.\\
		&\quad \quad \left.- \int_{\Om \cap\{h(w_k)\leq \al\}} \left(\int_{\Om \cap\{h(w_k)\geq \Lambda\} }\frac{F(y,h(w_k))}{|x-y|^{\mu}} dy \right) F(x,h(w_k)) ~dx\right|\leq 2\de_*.
		\end{align*}
		Now we claim that as $k\ra \infty$, for fixed positive real numbers $\al$ and $\Lambda$, the following holds:
		\begin{equation}\label{choq-new}
		\begin{split}
		&\left|\int_{\Om\cap\{h(w_k)\leq \al\}} \left(\int_{\Om \cap\{h(w_k)\leq \Lambda\}}\frac{F(y,h(w_k))}{|x-y|^{\mu}} dy \right) F(x,h(w_k)) ~dx\right.\\
		&\qquad- \left.\int_{\Om \cap\{h(w)\leq \al\}} \left(\int_{\Om \cap\{h(w)\leq \Lambda\}}\frac{F(y,h(w) )}{|x-y|^{\mu}} dy \right) F(x,h(w)) ~dx \right|\ra 0.
		\end{split}
		\end{equation}
		It is easy to compute that
		\begin{align}\label{a0}
		&\left(\int_{\Om \cap\{h(w_k)\leq \Lambda\} }\frac{F(y,h(w_k))}{|x-y|^{\mu}} dy \right) F(x,h(w_k))\chi_{ \Om \cap \{h(w_k)\leq \al\}}\notag\\  &\qquad\ra \left(\int_{\Om \cap \{h(w)\leq \Lambda\} }\frac{F(y,h(w))}{|x-y|^{\mu}} dy \right) F(x,h(w))\chi_{\Om \cap \{h(w)\leq \al\}}
		\end{align}
		point-wise a.e. as $k \to \infty$. Now taking $N=r$ in \eqref{k1} and using Lemma \ref{L1}-$(h_5)$ and \eqref{HLSineq}, we get a constant $C_{\al,\Lambda}>0$ depending on $\al$ and $\Lambda$ such that
		\begin{align*}
		&\int_{\Om \cap \{h(w_k)\leq \al\}}\left( \int_{\Om \cap \{h(w_k)\leq \Lambda\} }\frac{F(y,h(w_k))}{|x-y|^{\mu}} dy \right)  F(x,h(w_k))dx  \notag\\
		&\leq  C_{\al,\Lambda}\int_{\Om \cap \{h(w_k)\leq \al\}}\left( \int_{\Om\cap\{h(w_k)\leq \Lambda\} }\frac{|h(w_k(y))|^{N}}{|x-y|^{\mu}} dy \right)  |h(w_k(x))|^{N} dx \notag\\
		&\leq  C_{\al,\Lambda}\int_{\Om \cap \{h(w_k)\leq \al\}}\left( \int_{\Om\cap\{h(w_k)\leq \Lambda\} }\frac{|w_k(y)|^{N}}{|x-y|^{\mu}} dy \right)  |w_k(x)|^{N} dx \notag\\
		& \leq { {C_{\al,\Lambda}C(N,\mu)\|w_k\|_{L^{\frac{2N^2}{2N-\mu}}(\Om)}^{2N} \to C_{\al,\Lambda}C(N,\mu)\|w\|_{L^{\frac{2N^2}{2N-\mu}}(\Om)}^{2N}}} \; \text{as}\; k \to \infty,
		\end{align*}
	 since $w_k \to w$ strongly in $L^q(\Om)$ for each $q \in [1,\infty)$.  This combining with Theorem $4.9$ in \cite{bz} implies that there exists $\mc G \in L^1(\Om)$ such that  up to a subsequence, for each $k\in \mathbb N$, we have 
		\[\left|\left( \int_{\Om \cap\{h(w_k)\leq \Lambda\} }\frac{F(y,h(w_k))}{|x-y|^{\mu}} dy \right)  F(x,h(w_k))\chi_{ \Om \cap \{h(w_k)\leq \al\}} \right| \leq |\mc G(x)|.\]
		Therefore, using \eqref{a0} and applying the Lebesgue dominated convergence theorem, we obtain \eqref{choq-new} and hence, as $k\to\infty,$ we get
		\begin{align}\label{FF}
	 \int_{\Omega} \left(\int_{\Omega}\frac{F(y,h(w_k))}{|x-y|^{\mu}} dy \right) F(x,h(w_k)) ~dx\to  \int_{\Omega} \left(\int_{\Omega}\frac{F(y,h(w) )}{|x-y|^{\mu}} dy \right) F(x,h(w)) ~dx.
		\end{align} 
Following the similar arguments as discussed above in this proof, we can find as $\Lambda \to \infty$
		\begin{align}\label{3.25}
		&\int_{\Omega} \int_{\Om\cap\{h(w) \geq \Lambda\}} \frac{F(y,h(w))}{|x-y|^{\mu}} F(x,h(w)) dy ~dx=o(\Lambda),\\ &\int_{\Omega} \int_{\Om\cap\{h(w_k) \geq \Lambda\}} \frac{F(y,h(w_k))}{|x-y|^{\mu}} F(x,h(w_k)) dy ~dx = o(\Lambda),
		\end{align}
		\begin{equation}
		\int_{\Omega} \int_{\Om\cap \{h(w) \geq \Lambda\}} \frac{F(y,h(w_k))}{|x-y|^{\mu}} F(x,h(w)) dy ~dx = o(\Lambda),
		\end{equation}
		and
		\begin{equation}\label{3.27}
		\int_{\Omega} \int_{\Om\cap\{h(w_k) \geq \Lambda\}} \frac{F(y,h(w_k))}{|x-y|^{\mu}} F(x,h(w)) dy ~dx = o(\Lambda).
		\end{equation}
	So,
		\begin{equation*}
		\begin{split}
		& \int_{\Omega} \left(\int_{\Omega}\frac{|F(y,h(w_k))-F(y,h(w))|}{|x-y|^{\mu}} dy \right) |F(x,h(w_k))-F(x,h(w))|  ~dx\\&\leq
		2\int_{\Omega} \left(\int_{\Omega}\frac{\chi_{\{h(w_k)\geq \Lambda\}}(y)F(y,h(w_k))}{|x-y|^{\mu}} dy \right) F(x,h(w_k)) ~dx \\
		&+4 \int_{\Omega} \left(\int_{\Omega}\frac{F(y,h(w_k))\chi_{\{h(w)\geq \Lambda\}}(x)F(x,h(w))}{|x-y|^{\mu}} dy \right) ~dx\\&+4 \int_{\Omega} \left(\int_{\Omega}\frac{\chi_{\{h(w_k)\geq \Lambda\}}(y)F(y,h(w_k))F(x,h(w))}{|x-y|^{\mu}} dy \right) ~dx\\
		&+2\int_{\Omega} \left(\int_{\Omega}\frac{\chi_{\{h(w)\geq \Lambda\}}(y)F(y,h(w))}{|x-y|^{\mu}} dy \right) F(x,h(w)) ~dx \\
		&+\int_\Omega\Bigg[\left(\int_\Omega\frac{|F(y,h(w_k))\chi_{\{h(w_k)\leq \Lambda\}}(y)-F(y,h(w))\chi_{\{h(w)\leq \Lambda\}}(y)|}{|x-y|^\mu}dy\right)\\&\qquad\qquad\qquad|F(x,h(w_k))\chi_{\{h(w_k)\leq \La\}}(x)-F(x,h(w))\chi_{\{h(w)\leq \Lambda\}}(x)|\Bigg]~dx.
		\end{split}
		\end{equation*}
		Then employing the Lebesgue dominated convergence theorem we infer that the last integrand tends to $0$ as $k \to \infty.$ Hence, making use of \eqref{3.25}-\eqref{3.27}, we finally can conclude \eqref{3.23}.	
		\end{proof}
		\begin{lemma}\label{kc-ws} Assume that $(f_1)$-$(f_5)$ hold and  $h$ is defined as in \eqref{g}. Let $\{w_k\}\subset W^{1,N}_0(\Om)$ be a Palais-Smale sequence for $J$. Then
		for all $\varphi\in W_0^{1,N}(\Om),$   we have
		\begin{align*}\displaystyle\lim_{k\to\infty}\int_\Om \left(\int_\Om \frac{F(y, h(w_k))}{|x-y|^{\mu}}dy\right)f(x, h(w_k))h'(w_k)\varphi ~dx= \int_\Om \left(\int_\Om \frac{F(y,h(w))}{|x-y|^{\mu}}dy\right)f(x,h(w))h'(w)\varphi~dx.  \end{align*}
	\end{lemma}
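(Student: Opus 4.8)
The strategy is the standard Brezis–Lieb / splitting argument for convolution nonlinearities, but now with the extra compositions $h(w_k)$ and the weight $h'(w_k)$, exploiting the uniform bounds already recorded. First I would fix $\varphi\in W_0^{1,N}(\Om)$ and, by density, assume $\varphi\in C_c^\infty(\Om)$ (the general case follows since the maps involved are continuous in $\varphi$ with a bound uniform in $k$, which will come out of the estimates below). I would then split the domain of the outer $x$-integral according to whether $h(w_k)$ (respectively $h(w)$) is large or small: pick $\Lambda>s_0$ and write the integrand over $\{h(w_k)\le\Lambda\}$ and over $\{h(w_k)>\Lambda\}$. On the ``large'' set, use $(f_4)$ together with Lemma~\ref{L1}-$(h_4)$ exactly as in the proof of Lemma~\ref{PS-ws}: since $f(x,h(w_k))h(w_k)\le M_0\,s^{-m_0}f(x,h(w_k))$ is controlled and $h(w_k)=h'(w_k)^{-1}h(w_k)\ge h'(w_k)w_k/2$ is bounded below, the factor $1/\Lambda^{m_0+1}$ makes this piece smaller than any prescribed $\delta_*$, uniformly in $k$, after bounding $|\varphi|$ by $\|\varphi\|_\infty$ and using \eqref{wk-sol100} to control $\int(\int F(y,h(w_k))|x-y|^{-\mu}dy)f(x,h(w_k))h'(w_k)w_k\,dx\le C''$. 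The same estimate with $w$ in place of $w_k$ (and Fubini, as in Lemma~\ref{PS-ws}) handles the tail of the limit integral.

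Next, on the ``small'' set $\{h(w_k)\le\Lambda\}$ I also truncate the inner $y$-integral at level $\Lambda$ by the identical device, so that the remaining term is
\[
\int_{\Om\cap\{h(w_k)\le\Lambda\}}\Big(\int_{\Om\cap\{h(w_k)\le\Lambda\}}\frac{F(y,h(w_k))}{|x-y|^\mu}\,dy\Big)f(x,h(w_k))h'(w_k)\varphi\,dx .
\]
For this genuinely local term I would pass to the limit by dominated convergence. Pointwise a.e. convergence follows from $w_k(x)\to w(x)$ a.e., continuity of $f,F,h,h'$, and the fact that on these truncated sets the integrands are unambiguous limits (the characteristic functions $\chi_{\{h(w_k)\le\Lambda\}}$ converge a.e. to $\chi_{\{h(w)\le\Lambda\}}$ off the null set $\{h(w)=\Lambda\}$, which can be arranged by choosing $\Lambda$ outside an at-most-countable bad set). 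For the dominating function: on $\{h(w_k)\le\Lambda\}$ we have $F(y,h(w_k))\le C_\Lambda|h(w_k(y))|^N\le C_\Lambda|w_k(y)|^N$ and $f(x,h(w_k))h'(w_k)\le f(x,h(w_k))\le C_\Lambda|h(w_k(x))|^{N-1}\le C_\Lambda|w_k(x)|^{N-1}$ by \eqref{f}, \eqref{k1} with $r=N$ and Lemma~\ref{L1}-$(h_3),(h_5)$, so Hardy–Littlewood–Sobolev (Proposition~\ref{HLS}) gives the term bounded by $C_\Lambda\|w_k\|_{L^{2N^2/(2N-\mu)}}^{2N-1}\|\varphi\|_{L^\infty}$, which converges (hence is Cauchy, hence has an $L^1$ majorant after passing to a subsequence, via Theorem~4.9 in \cite{bz}, exactly as invoked in Lemma~\ref{PS-ws}). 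Dominated convergence then yields convergence of the truncated local term to the corresponding expression with $w$.

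Finally I would assemble the three pieces: given $\varepsilon>0$, first choose $\Lambda$ large so that both large-set tails (for $w_k$ uniformly in $k$, and for $w$) are below $\varepsilon$; with $\Lambda$ now fixed, the truncated local term converges, so it is within $\varepsilon$ of its limit for $k$ large; combining gives the claim up to $4\varepsilon$, and $\varepsilon$ is arbitrary. The main obstacle—and the only place real care is needed—is making the large-set estimate genuinely \emph{uniform in $k$}: this is where $(f_4)$ is essential, since it converts $F$ into $f\cdot h$ up to the $s^{-m_0}$ factor, and where one must be careful that the test function $\varphi$ only enters through $\|\varphi\|_\infty$ (finite for $\varphi\in C_c^\infty$), so that the uniform bound \eqref{wk-sol100} can be used verbatim. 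The density step back to general $\varphi\in W_0^{1,N}(\Om)$ then requires noting that the operator $\varphi\mapsto\int(\int F(y,h(w_k))|x-y|^{-\mu}dy)f(x,h(w_k))h'(w_k)\varphi\,dx$ is bounded on $W_0^{1,N}(\Om)$ with norm bounded uniformly in $k$ (again from \eqref{wk-sol100}, Hölder, and the continuous Trudinger–Moser embedding), which is already implicit in the Palais–Smale setup \eqref{kc-PS-bdd1}.
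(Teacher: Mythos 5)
Your outer decomposition starts along reasonable lines: the piece where the outer variable satisfies $h(w_k)>\Lambda$ is indeed uniformly small, since there $w_k\geq h(w_k)>\Lambda$ by $(h_5)$, so $f(x,h(w_k))h'(w_k)|\varphi|\leq \Lambda^{-1}\|\varphi\|_{\infty}f(x,h(w_k))h'(w_k)w_k$ and \eqref{wk-sol100} applies; the doubly truncated piece passes to the limit by dominated convergence; and your density/uniform-boundedness step for general $\varphi\in W^{1,N}_0(\Om)$ is correct, since \eqref{kc-PS-bdd1} bounds the functionals uniformly. The genuine gap is the cross term, $x$ in $\{h(w_k)\leq\Lambda\}$ and $y$ in $\{h(w_k)>\Lambda\}$, which you dismiss as being handled ``by the identical device'' of Lemma \ref{PS-ws}. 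It is not. That device works there because the \emph{outer} factor is $F(x,h(w_k))$: after converting the inner $F(y,h(w_k))$ via $(f_4)$ and $(h_4)$ into $\frac{2M_0}{\Lambda^{m_0+1}}f(y,h(w_k))h'(w_k)w_k(y)$, Fubini lands you exactly on the quantity controlled by \eqref{wk-sol100}. In your term the outer factor is $f(x,h(w_k))h'(w_k)\varphi$, so the same conversion and Fubini produce a double integral with $f$ in \emph{both} slots, i.e. $\int_\Om\int_\Om |x-y|^{-\mu}f(y,h(w_k))h'(w_k)w_k(y)\,f(x,h(w_k))h'(w_k)|\varphi(x)|\,dy\,dx$, and none of the a priori bounds \eqref{wk-sol10}, \eqref{wk-sol100} control an $f\otimes f$ interaction ($(f_4)$, $(f_5)$ bound $F$ by $f$, never the reverse). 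The obvious repair — bounding $f(x,h(w_k))h'(w_k)|\varphi|\leq C_\Lambda\|\varphi\|_\infty$ on the small set and using the uniform smallness of the $F$-tail coming from \eqref{b} — fails quantitatively: the tail gain is only polynomial, of order $\Lambda^{-(m_0+1)/2}$, while $C_\Lambda=\max_{\overline\Om\times[0,\Lambda]}f$ grows like $\exp(\Lambda^{2N/(N-1)})$, so the product blows up as $\Lambda\to\infty$; and for fixed $\Lambda$ you cannot instead pass to the limit in $k$ in this term, because the inner integrand $\chi_{\{h(w_k)>\Lambda\}}F(y,h(w_k))$ is precisely the undominated large-value part.

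This is exactly why the paper does not argue by truncation here at all: it tests the Palais–Smale relation with $\psi/(1+w_k)$ to get a uniform $L^1_{\mathrm{loc}}$ bound on $v_k=\bigl(\int_\Om F(y,h(w_k))|x-y|^{-\mu}dy\bigr)f(x,h(w_k))h'(w_k)$, extracts a weak\mbox{-}$*$ convergent subsequence to a Radon measure, shows the measure is absolutely continuous using the a.e. convergence of gradients (Lemma \ref{wk-sol}, proved via concentration compactness and the local exponential estimates), identifies the density by Radon–Nikodym, and concludes by density of $C^\infty_c(\Om)$. To salvage your approach you would need an ingredient of comparable strength for the cross term — e.g. local uniform $L^q$-bounds ($q>1$) on $f(\cdot,h(w_k))$ away from a finite singular set, as in \eqref{aa} of the appendix, combined with the Riesz semigroup/Cauchy–Schwarz argument and Lemma \ref{PS-ws} — so the lemma cannot be obtained from \eqref{wk-sol100} and $(f_4)$ alone as your sketch suggests.
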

	\begin{proof} 
		Let $\Om' \subset\subset \Om$ and $\psi \in C_c^\infty(\Om)$ such that $0\leq \psi \leq 1$ and $\psi \equiv 1$ in $\Om' $. One can easily compute that
		\begin{equation}\label{k}
		\begin{split}
		\left\| \frac{\psi}{1+w_k}\right\|^N = \int_\Om \left|\frac{\nabla \psi}{1+w_k}- \psi \frac{\nabla w_k}{(1+w_k)^2} \right|^N~dx
		\leq 2^{N-1}(\|\psi\|^N+ \|w_k\|^N),
		\end{split}
		\end{equation}
		which yields that $\frac{\psi}{1+w_k} \in W^{1,N}_0(\Om)$. Now taking $\phi=\frac{\psi}{1+w_k}$  in \eqref{kc-PS-bdd1} as a test function and using Lemma \ref{L1}-$(h_3)$ and \eqref{k}, we obtain
		\begin{align}\label{kc-ws-new1}
		&\int_{\Om^{'}}\left( \int_\Om \frac{F(y,h(w_k))}{|x-y|^\mu}dy\right)\frac{f(x,h(w_k))}{1+w_k}h'(w_k)~dx\notag\\& \leq \int_\Om \left( \int_\Om \frac{F(y,h(w_k))}{|x-y|^\mu}dy\right)\frac{f(x,h(w_k)) h'(w_k)\psi}{1+w_k}~dx\notag\\
		&\leq \int_\Om \left( \int_\Om \frac{F(y,h(w_k))}{|x-y|^\mu}dy\right)\frac{f(x,h(w_k)) \psi}{1+w_k}~dx\notag\\
		&=  \e_k \left\|\frac{\psi}{1+w_k}\right\| + \int_\Om  |\nabla w_k|^{N-2}\nabla w_k \nabla \left( \frac{\psi}{1+w_k}\right)~dx\notag\\
		& \leq \e_k 2^{\frac{N-1}{N}}(\|\psi\|+ \|w_k\|) +\int_\Om |\nabla w_k|^{N-2}\nabla w_k \left(\frac{\nabla \psi}{1+w_k}-\psi\frac{\nabla w_k}{(1+w_k)^2}\right)~dx\notag\\
		& \leq \e_k 2^{\frac{N-1}{N}}(\|\psi\|+ \|w_k\|) +\int_\Om |\nabla w_k|^{N-1} \left( |\nabla \psi|+ |\nabla w_k|\right)~dx\notag\\
		& \leq \e_k 2^{\frac{N-1}{N}}(\|\psi\|+ \|w_k\|)+ [\|\psi\|\|w_k\|^{N-1}+ \|w_k\|^N]\leq C_1,
		\end{align}
		where $C_1$ is a positive constant and  in the last line we used the fact that $\{w_k\}$ is bounded in $W^{1,N}_0(\Om)$.
		Again, using the boundedness of the sequence $\{w_k\}$, from \eqref{kc-PS-bdd2}, we get
		\begin{align}\label{kc-ws-new2}
		\int_{\Om^{'}} \left( \int_\Om \frac{F(y,h(w_k))}{|x-y|^\mu}dy\right){f(x,h(w_k))h'(w_k)}{w_k}~dx&
		\leq \int_{\Om} \left( \int_\Om \frac{F(y,h(w_k))}{|x-y|^\mu}dy\right){f(x,h(w_k))h'(w_k)}{w_k}~dx \notag\\
		&\leq \e_k\|w_k\|+\|w_k\|^N\leq C_2
		\end{align}
		for some constant $C_2>0$. Combining \eqref{kc-ws-new1} and \eqref{kc-ws-new2}, we deduce
		\begin{align*}
		&\int_{\Om^{'}}\left( \int_\Om \frac{F(y,h(w_k))}{|x-y|^\mu}dy\right){f(x,h(w_k))h'(w_k)}~dx \\
		& \leq  2\int_{\Om^{'}\cap \{w_k <1\}} \left( \int_\Om\frac{F(y,h(w_k))}{|x-y|^\mu}dy\right)\frac{f(x,h(w_k))h'(w_k)}{1+w_k}~dx\\& + \int_{\Om^{'}\cap \{w_k \geq 1\}} \left(\int_\Om \frac{F(y,h(w_k))}{|x-y|^\mu}dy\right)w_k{f(x,h(w_k))h'(w_k)}~dx\\
		& \leq 2\int_{\Om^{'}} \left( \int_\Om\frac{F(y,h(w_k))}{|x-y|^\mu}dy\right)\frac{f(x,h(w_k))h'(w_k)}{1+w_k}~dx\\& + \int_{\Om^{'}}\left( \int_\Om \frac{F(y,h(w_k))}{|x-y|^\mu}dy\right)w_k{f(x,h(w_k))h'(w_k)}~dx\\
		& \leq 2C_1+C_2  :=C_3.
		\end{align*}
		Thus, the sequence $\{v_k\}:=\left\{\left( \int_\Om\frac{F(y,h(w_k))}{|x-y|^\mu}dy\right){f(x,h(w_k))h'(w_k)}\right\}$ is bounded in $L^1_{\text{loc}}(\Om)$. Therefore, there exists a radon measure $\zeta$ such that, up to a subsequence, $v_k \rightharpoonup \zeta$ in the ${weak}^*$-topology as $k \to \infty$.  Hence,  we have
		\[\lim_{k \to \infty}\int_\Om\int_\Om \left( \frac{F(y,h(w_k))}{|x-y|^\mu}dy\right){f(x,h(w_k))h'(w_k)}\eta ~dx = \int_\Om \eta ~d\zeta,\; \forall \eta \in C_c^\infty(\Om). \]
		Since $w_k$ satisfies \eqref{kc-PS-bdd1}, we achieve
		\[\int_A \eta d\zeta= \lim_{k \to \infty} \int_A |\nabla w_k|^{N-2}\nabla w_k \nabla \eta ~dx, \;\;\forall A\subset \Om,  \]
		which together with Lemma~\ref{wk-sol} yields that the Radon measure $\zeta$ is absolutely continuous with respect to the Lebesgue measure. So, there exists a function $\varrho \in L^1_{\text{loc}}(\Om)$ such that for any $\eta\in C^\infty_c(\Omega)$, it holds that $\int_\Om \eta~ d\zeta= \int_\Om \eta \varrho~dx$, thanks to Radon-Nikodym theorem,.
		Therefore, we obtain
		\begin{align*}&\lim_{k \to \infty}\int_\Om\left( \int_\Om \frac{F(y,h(w_k))}{|x-y|^\mu}dy\right){f(x,h(w_k))h'(w_k)}\eta(x)~ ~dx\\&\qquad = \int_\Om \eta \varrho~dx= \int_\Om  \left( \int_\Om \frac{F(y,h(w))}{|x-y|^\mu}dy\right){f(x,h(w))h'(w)}\eta(x)~ ~dx, \;\;\forall\; \eta\in C^\infty_c(\Omega), \end{align*}
		since $ C^\infty_c(\Omega)$ is dense in $W_0^{1,N}(\Om)$, this completes the proof.
	\end{proof}
	
	\noi \textbf{Proof of Theorem \ref{T1}:}
	Let $\{w_k\}$ be a Palais-Smale sequence at the level $\beta^*$. Then $\{w_k\}$ can be obtained as a minimizing sequence associated to the variational problem \eqref{level}. Then by Lemma \ref{lem712},  there exists  $w \in W^{1,N}_0(\Om)$ such that, up to a subsequence,
	$w_k \rightharpoonup w$ weakly in $W^{1,N}_0(\Om)$ as $k \to \infty$.\\ 
	Now by using  Lemma \ref{wk-sol} and Lemma \ref{kc-ws}, we infer that $w$ forms a weak solution of \eqref{pp}.
	We claim that $w \not\equiv 0$. Indeed, if not, that is,  if $w \equiv 0$ then using \eqref{FF}, we have
	\[\int_\Om \left(\int_\Om \frac{ F(y,h(w_k))}{|x-y|^{\mu}}dy\right)F(x,h(w_k)) ~dx  \to 0\; \text{as}\; k \to \infty,\]
	which yields that $ \displaystyle\lim_{k \to \infty}J(w_k) = \frac{1}{N}\displaystyle\lim_{k\to\infty} \|w_k\|^N = \beta^*.$ That is,
	$$\lim_{k\to\infty} \|w_k\|^N = \beta^*N.$$ Therefore, recalling Lemma \ref{PS-level},  there exists a real number $l>0$, very close to $1$, and  there exists $k_0\in\mathbb N$ depending on $l$ such that 
	\begin{align}\label{inq}
	\|w_k\|^{\frac{N}{N-1}} < \frac{1}{2^{\frac{1}{N-1}}}\frac{2N-\mu}{ 2N}\al_N(1-l), \text{\;\; for all}\;\; k\geq k_0. 
	\end{align} 
	Now we show that
	\begin{align}\label{aaa}
	\int_{\Om}\left( \int_\Om\frac{F(y,h(w_k))}{|x-y|^{\mu}}dy\right)f(x,h(w_k))h'(w_k)w_k~dx \to 0 \;\text{as}\; k \to \infty.
	\end{align} 
	Now  using $(f_3)$, Proposition \ref{HLS}, \eqref{f},    Lemma \ref{L1}-$(h_4)$-$(h_6
	)$, H\"older's inequality, we deduce
	\begin{align}\label{wk-soln}
	&\int_{\Om}\left( \int_\Om\frac{F(y,h(w_k))}{|x-y|^{\mu}}dy\right)f(x,h(w_k))h'(w_k)w_k dx\notag\\
	&\leq\frac1\tau\int_{\Om}\left( \int_\Om\frac{f(y,h(w_k)) h(w_k)}{|x-y|^{\mu}}dy\right)f(x,h(w_k))h(w_k) dx\notag \\
	&\leq \frac 1\tau  C(N,\mu) \left(\int_{\Om}|f(x,h(w_k))h(w_k)|^{\frac{2N}{2N-\mu}}~dx\right)^{\frac{2N-\mu}{N}}\notag\\&\leq \tilde C_1(N,\mu,\e) \left(\int_{ \Om}\left(|h(w_k)|^N +  |h(w_k)|^r \exp((1+\e)|h(w_k)|^{\frac{2N}{N-1}})\right)^{\frac{2N}{2N-\mu}}~dx\right)^{\frac{2N-\mu}{N}}\notag \\
	&  \leq \tilde C_2(N,\mu,\e)\left[ \|w_k\|_{L^{\frac{2N^2}{2N-\mu}}(\Om)}^{2N}+ \|w_k\|_{L^{\frac{2Nrq'}{2N-\mu}}(\Om)}^{2r} \left(\int_{ \Om}\exp\left(q(1+\epsilon) \frac{2N}{2N-\mu}|h(w_k)|^{\frac{2N}{N-1}}\right)~dx\right)^{\frac{2N-\mu}{Nq}}\right] \notag \\
	&  \leq \tilde C_3(N,\mu,\e)\bigg[ \|w_k\|_{L^{\frac{2N^2}{2N-\mu}}(\Om)}^{2N}\notag\\&\qquad\qquad\qquad\qquad+ \|w_k\|_{L^{\frac{2Nrq'}{2N-\mu}}(\Om)}^{2r} \left(\int_{ \Om}\exp\left(q(1+\epsilon) \frac{2N}{2N-\mu}2^{\frac{1}{N-1}}\|w_k\|^{\frac{N}{N-1}}\left(\frac{|w_k|}{\|w_k\|}\right)^{\frac{N}{N-1}}\right)~dx\right)^{\frac{2N-\mu}{Nq}}\bigg],
	\end{align} where $q'=\frac{q}{q-1}$ is the conjugate of the exponent $q>1$ and $\tilde C_i's,\; i=1,2,3$ are some positive constants.
  Now we can choose $q>1$ very close to $1$ and $\e>0$  sufficiently small such that, for sufficiently large $k\in\mathbb N$, from \eqref{inq}, it follows that  $q(1+\epsilon) \frac{2N}{2N-\mu}2^{\frac{1}{N-1}}\|w_k\|^{\frac{N}{N-1}}< q(1+\e)(1-l)\al_N<\al_N$.
	Therefore,  in the light of  Theorem \ref{TM-ineq}, for sufficiently large  $k\in\mathbb N$,  it follows that
	$$\int_\Om\exp\left(q(1+\epsilon) \frac{2N}{2N-\mu}2^{\frac{1}{N-1}}\|w_k\|^{\frac{N}{N-1}}\left(\frac{|w_k|}{\|w_k\|}\right)^{\frac{N}{N-1}}\right)<C.$$
	Using this combining with the fact  that  $w_k\to0$ strongly in $L^q(\Om)$ for $1\leq q<\infty$, as $k\to\infty,$ from \eqref{wk-soln},  we finally obtain \eqref{aaa}.
	Since, $\{w_k\}$ is a Palais-Smale sequence for $J$, we have $\displaystyle\lim_{k\to \infty}\langle J'(w_k),w_k \rangle=0$ which together with \eqref{aaa} gives that $\ds\lim_{k\to \infty}\|w_k\|^N=0$.  Thus, from Lemma \ref{PS-ws}, it follows that $\displaystyle\lim_{k \to \infty}J(w_k)=0 =\beta^*$ which is a contradiction to the fact that $\beta^*>0$. Hence, $w \not \equiv 0$.\\
	Next, we prove that $w>0$ in $\Om$. Since, $w$ is a weak solution to \eqref{pp},
	\begin{align*}
	\int_\Om |\nabla w|^{N-2}\nabla w\nabla \varphi~dx = \int_\Om \left(\int_\Om \frac{F(y,h(w))}{|x-y|^{\mu}}dy\right)f(x,h(w))h'(w)\varphi~dx,
	\end{align*}
	for all $\varphi \in W^{1,N}_0(\Om)$. In particular, taking $\varphi = w^-$ in the above equation, we obtain $\|w^-\|=0$ which implies that $w^-=0$ a.e. in $\Om$. Therefore, $w\geq 0$ a.e. in $\Om$.\\
Now from Theorem \ref{TM-ineq}, we have $f(\cdot,h(w))\in L^q(\Om)$, for $1\leq q<\infty$.
Also by \eqref{KC-new1}, $F(\cdot,h(w)) \in L^q(\Om),$ for any $ q \in [1,\infty)$. Since $\mu \in (0,N)$ and $\Om$ is bounded,  it follows that $y\to |x-y|^{-\mu} \in L^{q}(\Om)$ for all $q\in (1, \frac{N}{\mu})$ uniformly in $x\in \Omega$. Thus, from H\"older's inequality, we can deduce
\begin{align}\label{wk-sol7}
\int_\Om \frac{F(y,h(w))}{|x-y|^{\mu}}dy \in L^\infty(\Om).
\end{align}
	 Therefore, recalling $h'(s)\leq1$, we get $$\left(\int_\Om \frac{F(y,h(w))}{|x-y|^{\mu}}~dy \right)f(x,h(w)) h'(w)\in L^q(\Om),$$ for $1 \leq q <\infty$. Now by employing the regularity results for the elliptic equations, we infer that $w \in L^\infty(\Om)$ and $w \in C^{1,\gamma}(\overline{\Om})$ for some $\gamma \in (0,1)$.  Finally, from the strong maximum principle, we draw the conclusion that $w>0$ in $\Om$. This completes the proof of Theorem \ref{T1}.

	\section{Appendix A}
\noi In this section,  we give the proof of the following almost everywhere convergence of gradients of Palais-Smale sequences using the concentration compactness arguments. 
	\begin{lemma}\label{wk-sol} Suppose the assumptions $(f_1)$-$(f_6)$   hold and  $h$ is defined as in \eqref{g}. 	 Let $\{w_k\}\subset W^{1,N}_0(\Om)$ be a Palais-Smale sequence for $ J$. Then $\nabla w_k \rightarrow \na w$ a.e. in $\Omega$. Moreover, we have, as $k\to\infty$
		\begin{equation}\label{wk-sol2}
		|\nabla w_k|^{N-2}\nabla w_k \rightharpoonup |\nabla w|^{N-2}\nabla w\; \text{weakly in}\; (L^{\frac{N}{N-1}}(\Om))^N.
		\end{equation}
	\end{lemma}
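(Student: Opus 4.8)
The goal is the almost-everywhere convergence of gradients of a Palais--Smale sequence, which is the standard first step toward showing the weak limit is a weak solution. The natural strategy is the concentration--compactness / Boccardo--Murat argument adapted to the $N$-Laplacian. First I would record the consequences of boundedness already established in Lemma~\ref{lem712}: up to a subsequence $w_k\rightharpoonup w$ in $W_0^{1,N}(\Om)$, $w_k\to w$ in $L^q(\Om)$ for all $q\in[1,\infty)$, $w_k\to w$ a.e., and (from \eqref{wk-sol100}) the right-hand side densities $\left(\int_\Om \frac{F(y,h(w_k))}{|x-y|^\mu}dy\right)f(x,h(w_k))h'(w_k)$ are bounded in $L^1_{\mathrm{loc}}(\Om)$, hence converge weakly-$*$ to a Radon measure. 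The plan is then to test the Palais--Smale condition \eqref{kc-PS-bdd1} against a carefully chosen truncated test function and extract pointwise convergence of $\nabla w_k$ on each compactly contained subdomain, then exhaust $\Om$.

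Concretely, fix $\Om'\subset\subset\Om$ and a cutoff $\psi\in C_c^\infty(\Om)$ with $0\le\psi\le1$, $\psi\equiv1$ on $\Om'$. For $T>0$ let $T_s$ be the truncation at level $s$ and test \eqref{kc-PS-bdd1} with $\phi=\psi\,T_{1}(w_k-w)$ (or $\phi=\psi\,(w_k-w)$ combined with a truncation of $w_k-w$), which is admissible since the densities are only $L^1_{\mathrm{loc}}$ but the test function is bounded with bounded support. One obtains
\[
\int_\Om \psi\big(|\nabla w_k|^{N-2}\nabla w_k-|\nabla w|^{N-2}\nabla w\big)\cdot\nabla T_1(w_k-w)\,dx \;=\; o(1),
\]
after moving the $|\nabla w|^{N-2}\nabla w$ term to the left using weak convergence of $|\nabla w_k|^{N-2}\nabla w_k$ in $(L^{N/(N-1)})^N$ (which follows from boundedness), absorbing the gradient-of-$\psi$ terms (controlled by $\|w_k-w\|_{L^N}\to0$ on the support of $\nabla\psi$), and using that the right-hand side contribution tends to zero because $T_1(w_k-w)\to0$ in $L^\infty$-weak-$*$ / a.e. with the densities tight in $L^1_{\mathrm{loc}}$. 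By the classical inequality $\big(|a|^{N-2}a-|b|^{N-2}b\big)\cdot(a-b)\ge c_N|a-b|^N$ (for $N\ge2$) the integrand is nonnegative on $\{|w_k-w|\le1\}$; a Boccardo--Murat-type argument (splitting into $\{|w_k-w|\le1\}$ and using Hölder plus boundedness on the complement, then letting the truncation level go to $1$ suffices, or iterating) yields $\nabla w_k\to\nabla w$ in measure on $\Om'$, hence a.e. after passing to a further subsequence. Exhausting $\Om$ by an increasing sequence $\Om'_j\uparrow\Om$ and a diagonal extraction gives $\nabla w_k\to\nabla w$ a.e. in $\Om$. The weak convergence \eqref{wk-sol2} then follows: $|\nabla w_k|^{N-2}\nabla w_k$ is bounded in $(L^{N/(N-1)}(\Om))^N$ and converges a.e. to $|\nabla w|^{N-2}\nabla w$, so by the standard Lemma on weak convergence together with a.e. convergence it converges weakly to that limit.

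The main obstacle I anticipate is the low integrability of the right-hand side: the densities are only known to be bounded in $L^1_{\mathrm{loc}}$, so the usual $W^{-1,N'}$ pairing is not available and one must keep the test functions genuinely bounded with compact support throughout, which is precisely why the truncation $T_1(w_k-w)$ rather than $w_k-w$ itself is needed, and why the cutoff $\psi$ is essential. Controlling the term $\int |\nabla w_k|^{N-1}|\nabla\psi|\,|T_1(w_k-w)|\,dx$ requires using $|T_1(w_k-w)|\le|w_k-w|$ together with $w_k\to w$ in $L^N(\mathrm{supp}\,\nabla\psi)$ and boundedness of $\|\nabla w_k\|_{N}$; this is routine but must be done carefully. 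A secondary delicate point is justifying that the right-hand side term in the tested identity vanishes — here one uses that $\psi T_1(w_k-w)$ is uniformly bounded and converges to $0$ a.e., combined with the equi-integrability furnished by the $L^1_{\mathrm{loc}}$ bound (Vitali), so that the product against the densities tends to $0$. Once these two estimates are in place, the monotonicity inequality for the $N$-Laplacian does the rest in the standard way.
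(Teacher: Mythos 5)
Your overall strategy (a Boccardo--Murat truncation argument using only an $L^1_{\mathrm{loc}}$ bound on the Choquard densities, followed by the monotonicity inequality for the $N$-Laplacian and a diagonal exhaustion) is genuinely different from the paper's proof, which runs a concentration--compactness analysis: the paper introduces the defect measure $\sigma$ of $|\nabla w_k|^N$, shows the set $X_\nu$ of atoms above a threshold $\nu$ tied to the Trudinger--Moser constant is finite, proves local $L^q$ bounds on $f(x,h(w_k))$ and convergence of the nonlinear terms on compact sets avoiding $X_\nu$ (via Vitali, the semigroup property of the Riesz potential, Proposition \ref{HLS} and Lemma \ref{PS-ws}), and only then concludes by the truncation/monotonicity argument of do \'O. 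In principle your route is more elementary and would make that machinery unnecessary for this lemma; but as written it has a genuine gap exactly at the point the paper's machinery is designed to handle.

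The gap is the claim that the tested right-hand side $\int_\Om v_k\,\psi\,T_1(w_k-w)\,dx$ vanishes, justified by ``equi-integrability furnished by the $L^1_{\mathrm{loc}}$ bound (Vitali)''. A bound in $L^1_{\mathrm{loc}}$ does \emph{not} give equi-integrability: for an arbitrary Palais--Smale sequence of this critical exponential problem the densities $v_k=\left(\int_\Om \frac{F(y,h(w_k))}{|x-y|^\mu}dy\right)f(x,h(w_k))h'(w_k)$ may concentrate (think $v_k\sim k\chi_{B_{1/k}}$), and they would do so precisely where $|w_k-w|\geq 1$, so $\psi T_1(w_k-w)=\psi$ there and the product does not tend to $0$; boundedness of $\psi T_1(w_k-w)$ plus a.e.\ convergence to $0$ is not enough. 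With truncation at level $1$ the term is only $O(1)$. The standard repair is the actual Boccardo--Murat scheme: test with $\psi T_\epsilon(w_k-w)$ for small $\epsilon$, bound the nonlocal term by $\epsilon\sup_k\|v_k\|_{L^1(\mathrm{supp}\,\psi)}\leq C\epsilon$ (no vanishing in $k$ needed), run the H\"older/monotonicity splitting on $\{|w_k-w|<\epsilon\}$ and its complement, and send $\epsilon\to 0$ \emph{after} $k\to\infty$ --- the truncation level must go to $0$, not to $1$. A second, smaller gap: the $L^1_{\mathrm{loc}}$ bound on $v_k$ does not follow from \eqref{wk-sol100} alone, since that estimate controls $\int v_k\,w_k\,dx$ and degenerates where $w_k$ is small; one needs the test function $\psi/(1+w_k)$ in \eqref{kc-PS-bdd1} together with \eqref{kc-PS-bdd2} and the splitting $\{w_k<1\}\cup\{w_k\geq 1\}$, as in the first part of Lemma \ref{kc-ws} (that part does not use Lemma \ref{wk-sol}, so there is no circularity). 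With these two repairs your argument closes and gives the a.e.\ gradient convergence and \eqref{wk-sol2}; without them, the key step fails in exactly the concentration scenario that the paper's Claims 1 and 2 are built to exclude.
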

	\begin{proof}
		Since Lemma \ref{lem712} yields that $\{w_k\}$  is bounded in $W^{1,N}_0(\Om)$,  there exists  $w\in W^{1,N}_0(\Om)$ such that, in the sense of subsequence, we have 
		$w_k \rightharpoonup w$ weakly in $ W^{1,N}_0(\Om)$,
		$w_k \to w$  strongly in $L^q(\Om),\, q \in [1,\infty)$,
		$w_k(x) \to w(x)$ point-wise a.e. in $ \Om,$
		as $k \to \infty$.
		From the properties of the sequence $\{w_k\}$, it is evident that the sequence $\{|\nabla w_k|^{N-2}\nabla w_k\}$ must be bounded in $(L^{\frac{N}{N-1}}(\Om))^N$, which implies that 
		there exists $u \in (L^{\frac{N}{N-1}}(\Om))^N$ such that,
		\begin{align*}|\nabla w_k|^{N-2}\nabla w_k \rightharpoonup u \; \text{weakly in}\;  (L^{\frac{N}{N-1}}(\Om))^N \; \text{as} \; k \to \infty.\end{align*}
		Also we have, $\{|\nabla w_k|^N\}$ is bounded in $L^1(\Om)$,  which yields that there exists a non-negative radon measure $\sigma$ such that, up to a subsequence, we have
		\begin{align*}|\nabla w_k|^N \to \sigma \; \text{in}\; (C(\overline{\Om}))^*\; \text{as}\; k \to \infty.\end{align*}
		Our aim is to show $u = |\nabla w|^{N-2}\nabla w $.
		For that, first we take $\nu>0$ and set $X_\nu := \{x \in \overline \Om:\; \sigma(B_l(x)\cap \overline \Om)\geq \nu, \;\text{for all}\; l>0\}$.\\
		{\bf{Claim 1:}}  $X_\nu$ is a finite set. \\
		Indeed, if not, then there exists a sequence of distinct points $\{z_k\}$ in $X_\nu$ such that, $\sigma(B_l(z_k)\cap \overline \Om)\geq \nu$ for all $l>0$ and for all $k\in\mathbb N$. This gives that $\sigma(\{z_k\}) \geq \nu$ for all $k$. Therefore, $\sigma(X_\nu)= +\infty$. But this is a contradiction to the fact that
		\[\sigma(X_\nu) = \lim_{k \to \infty} \int_{X_\nu} |\nabla w_k|^N ~dx \leq C.\] Hence, the claim holds.
		Thus, we can take $X_\nu= \{z_1,z_2,\ldots, z_n\}$.\\
		{\bf	{Claim 2:} } We can choose  $\nu>0,$ such that $\nu^{\frac{1}{N-1}} <\frac{1}{2^{N-1}}  \frac{2N-\mu}{2N} \alpha_N$ and we have
		\begin{align}\label{wk-sol6}
		&\lim_{k \to \infty} \int_S\left( \int_\Om \frac{F(y, h (w_k))}{|x-y|^{\mu}}dy\right) f(x,h(w_k))h'(w_k)w_k~dx\notag\\ &\qquad=  \int_S \left( \int_\Om \frac{F(y,h(w))}{|x-y|^{\mu}}dy\right) f(x,h(w)) h'(w)w~dx, \end{align} where  $S$ is any compact subset of $\oline \Om \setminus X_\nu$.\\
		Let $z_0 \in S$ and $l_0>0$ be such that $\sigma(B_{l_0}(z_0) \cap \oline \Om) < \nu$ that is $z_0 \notin X_\sigma$. Also, we consider  $\phi \in C_c^\infty(\Om)$ satisfying $0\leq \phi(x)\leq 1$ for $x \in \Om$, $\phi \equiv 1$ in $B_{\frac{l_0}{2}}(z_0)\cap \oline \Om$ and $\phi \equiv 0$ in $\oline \Om \setminus (B_{l_0}(z_0)\cap \oline \Om)$. Then
		\[\lim_{k \to \infty} \int_{B_{\frac{l_0}{2}}(z_0) \cap \oline \Om}|\nabla w_k|^N dx\leq \lim_{k \to \infty} \int_{B_{l_0}(z_0) \cap \oline \Om}|\nabla w_k|^N\phi dx \leq \sigma(B_{l_0}(z_0) \cap \oline \Om) < \nu. \]
		Hence, for sufficiently large $k \in \mb N$ and  sufficiently $\e>0$ small, it follows that
		\begin{equation}\label{wk-sol3}
		\int_{B_{\frac{l_0}{2}}(z_0) \cap \oline \Om}|\nabla w_k|^N \leq \nu(1-\e).
		\end{equation}
		Now we estimate the following using $(f_3)$, \eqref{wk-sol3} and  Lemma \ref{L1}-$(h_6
		)$:
		\begin{equation*}
		\begin{split}
		& \int_{B_{\frac{l_0}{2}}(z_0) \cap \oline \Om}|f(x,h(w_k))|^q~dx=  \int_{B_{\frac{l_0}{2}}(z_0) \cap \oline \Om}|g(x,h(w_k))|^q\exp\left(q|h(w_k)|^{\frac{2N}{N-1}}\right)~dx \\
		&  \leq C \int_{B_{\frac{l_0}{2}}(z_0) \cap \oline \Om}\exp\left((1+\epsilon)2^{\frac{1}{N-1}}q|h(w_k)|^{\frac{2N}{N-1}}\right)~dx\\
		&  \leq C\int_{B_{\frac{l_0}{2}}(z_0) \cap \oline \Om}\exp\left((1+\epsilon)2^{\frac{1}{N-1}}q|w_k|^{\frac{N}{N-1}}\right)~dx\\
		& \leq C \int_{B_{\frac{l_0}{2}}(z_0) \cap \oline \Om}\exp\bigg((1+\epsilon)q 2^{\frac{1}{N-1}}\nu^{\frac{1}{N-1}}(1-\e)^{\frac{1}{N-1}}\bigg(\frac{|w_k|^N}{\int_{B_{\frac{l_0}{2}}(z_0) \cap \oline \Om}|\nabla w_k|^N dx}\bigg)^{\frac{1}{N-1}}\bigg)~dx.
		\end{split}
		\end{equation*}
		Thus, we can choose $q>1$ and   $\e>0$ such that $(1+\e) 2^{\frac{1}{N-1}}q \nu^{\frac{1}{N-1}} < \alpha_N$ and then using Theorem \ref{TM-ineq} in the last relation, we obtain
		\begin{align}\label{aa}
		& \int_{B_{\frac{l_0}{2}}(z_0) \cap \oline \Om}|f(x,h(w_k))|^q~dx\leq C
		\end{align} for some constant $C>0$, independent of $k.$ Next, we consider
		\begin{align}\label{a2}
		&\int_{B_{\frac{l_0}{2}}(z_0) \cap \oline \Om} \left|\left( \int_\Om \frac{F(y,h(w_k))}{|x-y|^{\mu}}dy\right) f(x,h(w_k))h'(w_k) w_k- \left( \int_\Om \frac{F(y,h(w))}{|x-y|^{\mu}}dy\right) f(x,h(w))h'(w) w \right|~dx\notag\\
		& \leq \int_{B_{\frac{l_0}{2}}(z_0) \cap \oline \Om} \left|\left( \int_\Om \frac{F(y,h(w))}{|x-y|^{\mu}}dy\right) (f(x,h(w_k))h'(w_k) w_k-f(x,h(w))h'(w))w\right|~dx\notag\\
		& \quad + \int_{B_{\frac{l_0}{2}}(z_0) \cap \oline \Om} \left|\left( \int_\Om \frac{F(y,h(w_k))-F(y,h(w))}{|x-y|^{\mu}}dy\right) f(x,h(w_k))h'(w_k) w_k\right|~dx\notag\\
		&:= J_1 + J_2.
		\end{align}
		From the asymptotic growth assumptions on  $f$, we obtain
		\begin{equation}\label{wk-sol8}
		\lim_{s \to \infty} \frac{f(x,s
			)s}{(f(x,s))^r} = 0\; \text{uniformly in }x \in \Omega,\; \text{for all}\; r>1.
		\end{equation}
		Using \eqref{wk-sol7}, we get
		\[
		J_1 \leq  C\int_{B_{\frac{l_0}{2}}(z_0) \cap \oline \Om}  |f(x,h(w_k))h'(w_k)w_k-f(x,h(w))h'(w)w|~dx,\]
		where $C>0$ is a constant, independent on $k$. Moreover, \eqref{aa}  and \eqref{wk-sol8} together with Lemma \ref{L1}-$(h_4)$ imply that   $\{f(x,h(w_k))h'(w_k) w_k\}$ is an equi-integrable family of functions over ${B_{\frac{l_0}{2}}(z_0) \cap \oline \Om} $. Also, by the continuity of the functions $f $ and $h$, we have   $f(x,h(w_k))h'(w_k)w_k \to f(x,h(w))h'(w)w$  a.e. in $\Om$ as $k \to \infty.$ Hence, by applying Vitali's convergence theorem, we obtain $$J_1 \to 0 \text {\;\;\;\; as \;\;} k\to \infty.$$ Next, we show that $J_2 \to 0$ as $k \to \infty$.\\
	For that, first we recall the semigroup property of the Riesz potential and get the following estimate:
		\begin{align}\label{r}
		J_2=&\int_{\Omega} \left(\int_{\Omega}\frac{F(y,h(w_k))-F(y,h(w))}{|x-y|^{\mu}} dy \right) \chi_{B_{\frac{l_0}{2}}(z_0) \cap \overline{\Omega}}(x) f(x,h(w_k))h'(w_k) w_k ~dx \notag \\
		&\leq  C\left(\int_{\Omega} \left( \int_{\Omega}\frac{|F(y,h(w_k))- F(y,h(w))| dy }{|x-y|^{\mu}}\right) |F(x,h(w_k))- F(x,h(w))| ~dx \right)^{\frac12}\notag\\
		&\quad \times \left(\int_{\Omega} \left(\int_{\Omega} \chi_{B_{\frac{l_0}{2}}(z_0) \cap \overline{\Omega}}(y) \frac{f(y,h(w_k))h'(w_k) w_k}{|x-y|^{\mu}} dy \right) \chi_{B_{\frac{l_0}{2}} (z_0)\cap \overline{\Omega}}(x) f(x,h(w_k)) h'(w_k) w_k ~dx \right)^{\frac12},
		\end{align} where  $C>0$  is some constant, independent of $k$.
		Combining \eqref{aa}, \eqref{wk-sol8}, Lemma \ref{L1}-$(h_4)$,  Proposition \ref{HLS} and  $\nu^{\frac{1}{N-1}} < \frac{1}{2^{N-1}}  \frac{2N-\mu}{2N} \alpha_N$, it follows that
		\begin{align*}
		&\left(\int_{\Omega} \left(\int_{\Omega} \chi_{B_{\frac{l_0}{2}} (z_0)\cap \overline{\Omega}}(y)\frac {f(y,h(w_k))h'(w_k) w_k}{|x-y|^\mu} dy \right) \chi_{B_{\frac{l_0}{2}}(z_0) \cap \overline{\Omega}}(x) f(x,h(w_k))h'(w_k) w_k ~dx \right)^{\frac12}\\
			&\leq \left(\int_{\Omega} \left(\int_{\Omega} \chi_{B_{\frac{l_0}{2}} (z_0)\cap \overline{\Omega}}(y)\frac {f(y,h(w_k))h(w_k)}{|x-y|^\mu} dy \right) \chi_{B_{\frac{l_0}{2}}(z_0) \cap \overline{\Omega}}(x) f(x,h(w_k))h(w_k) ~dx \right)^{\frac12}\\& \leq \|\chi_{B_{\frac{l_0}{2}} (z_0)\cap \overline{\Omega}}f(\cdot,h(w_k))h(w_k) \|_{L^{\frac{2N}{2N-\mu}}(\Omega)} \leq C.
		\end{align*}
		This together with   \eqref{r} and Lemma \ref{PS-ws} gives that $$J_2 \to 0 \text{\;\;\;\; as \;\; } k \to \infty.$$
		Hence, from \eqref{a2}, we can infer that 
		\begin{align*}&\lim_{k \to \infty}\int_{B_{\frac{l_0}{2}}(z_0) \cap \oline \Om} \Bigg|\left( \int_\Om \frac{F(y,h(w_k))}{|x-y|^{\mu}}dy\right) f(x,h(w_k))h'(w_k)w_k\\&\qquad\qquad\qquad\qquad\qquad- \left( \int_\Om \frac{F(y,h(w))}{|x-y|^{\mu}}dy\right) f(x,h(w))h'(w)w \Bigg|~dx=0.\end{align*}
		Since $S$ is compact, we can repeat this procedure over a finite covering of balls and hence,  \eqref{wk-sol6} is achieved. Finally, the proof of \eqref{wk-sol2} can be concluded by the similar standard arguments as in the proof of Lemma $4$ in \cite{M-do1}. 
\end{proof} 
\noindent{\bf Acknowledgements:} 
The second author would like to thank the Science and Engineering Research Board, Department of Science and Technology,
Government of India for the financial support under the grant MTR/2018/001267.                                                   

\end{document}